\documentclass[12pt]{amsart}
\usepackage{amsmath,amsthm,amsfonts,amssymb,mathrsfs}
\date{\today}

\usepackage{color}
\input xymatrix
\xyoption{all}

\usepackage{hyperref}

  \setlength{\textwidth}{18.5truecm}
   \setlength{\textheight}{24truecm}
   \setlength{\oddsidemargin}{-28.5pt}
   \setlength{\evensidemargin}{-28.5pt}
   \setlength{\topmargin}{-30pt}


\newtheorem{theorem}{Theorem}
\newtheorem*{theorem*}{Main Theorem}

\newtheorem{proposition}{Proposition}
\newtheorem{corollary}{Corollary}
\newtheorem{lemma}{Lemma}                 
\newtheorem*{question*}{Question}
\theoremstyle{definition}


\newcommand{\op}{\operatorname}

\begin{document}

\title[On locally compact semitopological graph inverse semigroups]{On locally compact semitopological graph inverse semigroups}

\author[S.~Bardyla]{Serhii~Bardyla}
\address{Faculty of Applied Mathematics and Informatics, National University of Lviv,
Universytetska 1, Lviv, 79000, Ukraine}
\email{sbardyla@yahoo.com}

\keywords{locally compact space, semitopological semigroup, polycyclic monoid, graph inverse semigroup}

\subjclass[2010]{Primary 20M18, 22A15. Secondary 54D45}

\begin{abstract}
In this paper we investigate locally compact semitopological graph inverse semigroups.  Our main result is the following: if a directed graph $E$ is strongly connected and has finitely many vertices, then any Hausdorff shift-continuous locally compact topology on the graph inverse semigroup $G(E)$ is either compact or discrete. This result generalizes results of Gutik and Bardyla who proved the above dichotomy for Hausdorff locally compact shift-continuous topologies on polycyclic monoids $\mathcal{P}_1$ and $\mathcal{P}_{\lambda}$, respectively.
\end{abstract}
\maketitle

\section{Introduction and Background}
In this paper all topological spaces are assumed to be Hausdorff. We shall follow the terminology of~\cite{Clifford-Preston-1961-1967,
Engelking-1989, Lawson-1998, Ruppert-1984}.
A semigroup $S$ is called an \emph{inverse semigroup} if for each element $a\in S$ there exists a unique
element $a^{-1}\in S$ such that $aa^{-1}a=a$ and $a^{-1}aa^{-1}=a^{-1}$. The element $a^{-1}$ is called the {\em inverse} of $a$.
The map $S\to S$, $x\mapsto x^{-1}$ assigning to each element of an inverse semigroup its inverse is called the \emph{inversion}.

A {\em directed graph} $E=(E^{0},E^{1},r,s)$ consists of sets $E^{0},E^{1}$ of {\em vertices} and {\em edges}, respectively, together with functions $s,r:E^{1}\rightarrow E^{0}$, called the {\em source} and the {\em range} functions, respectively. In this paper we refer to directed graphs simply as ``graphs".  A {\em path} $x=e_{1}\ldots e_{n}$ in a graph $E$ is a finite sequence of edges $e_{1},\ldots,e_{n}$ such that $r(e_{i})=s(e_{i+1})$ for each positive integer $i<n$.  We extend the source and range functions $s$ and $r$ on the set $\operatorname{Path}(E)$ of all pathes in graph $E$ as follows: for each $x=e_{1}\ldots e_{n}\in \operatorname{Path}(E)$ put $s(x)=s(e_{1})$ and $r(x)=r(e_{n})$. By $|x|$ we denote the length of the path $x$. We consider each vertex being a path of length zero. An edge $e$ is called a {\em loop} if $s(e)=r(e)$. A path $x$ is called a {\em cycle} if $s(x)=r(x)$ and $|x|>0$.
Let $a=e_1\ldots e_n$ and $b=f_1\ldots f_m$ be two paths such that $r(a)=s(b)$. Then by $ab$ we denote the path $e_1\ldots e_nf_1\ldots f_m$.
A path $x$ is called a {\em prefix} (resp. {\em suffix}) of a path $y$ if there exists path $z$ such that $y=xz$ (resp. $y=zx$).
A graph $E$ is called {\em finite} if the sets $E^0$ and $E^1$ are finite and {\em infinite} in the other case. A graph $E$ is called {\em strongly connected} if for each pair of vertices $e,f\in E^0$ there exist paths $u,v\in \op{Path}(E)$ such that $s(u)=r(v)=e$ and $s(v)=r(u)=f$.

A topological (inverse) semigroup is a Hausdorff topological space together with a continuous semigroup operation (and an~inversion, respectively).  If $S$ is a semigroup (an inverse semigroup) and $\tau$ is a topology on $S$ such that $(S,\tau)$ is a topological (inverse) semigroup, then we
shall call $\tau$ a (\emph{inverse}) \emph{semigroup} \emph{topology} on $S$.
A semitopological semigroup is a Hausdorff topological space together with a separately continuous semigroup operation. For each element $x$ of a semigroup $S$ the map $l_x(s):s\rightarrow xs$ ($r_x(s):s\rightarrow sx$, resp.) is called a left (right, resp.) shift on the element $x$. Observe that semigroup $S$ endowed with a topology is semitopological iff for each element $x\in S$ left and right shifts are continuous.
A topology $\tau$ on a semigroup $S$ is called shift-continuous if $(S,\tau)$ is a semitopological semigroup.
A semitopological inverse semigroup $S$ is called quasi-topological if the inversion map $S\to S$, $x\mapsto x^{-1}$, is continuous.

The bicyclic monoid ${\mathscr{C}}(p,q)$ is the semigroup with the identity $1$ generated by two elements $p$ and $q$ subject to the condition $pq=1$.
The bicyclic semigroup admits only the discrete semigroup topology~\cite{Eberhart-Selden-1969}. In \cite{Bertman-West-1976} this result was extended over the case of semitopological semigroups. The closure of a bicyclic semigroup in a locally compact topological inverse semigroup was described  in~\cite{Eberhart-Selden-1969}.
In~\cite{Gutik-2015} Gutik proved the following theorem.

\begin{theorem}[{\cite[Theorem 1]{Gutik-2015}}]\label{Gutik}
Any locally compact shift-continuous topology on the bicyclic monoid with adjoined zero is either compact or discrete.
\end{theorem}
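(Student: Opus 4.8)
The plan is to locate the only point of $S:=\mathscr{C}(p,q)\cup\{0\}$ that can fail to be isolated, identify it as the zero, prove that every neighbourhood of the zero is cofinite, and then use local compactness to upgrade this to compactness of $S$. Let $\tau$ be a Hausdorff shift-continuous locally compact topology on $S$. First I would note that $\mathscr{C}(p,q)$, with the subspace topology, is again a Hausdorff semitopological semigroup (its shifts are restrictions of those of $S$ and carry $\mathscr{C}(p,q)$ into itself), so by the Bertman--West theorem \cite{Bertman-West-1976} (extending \cite{Eberhart-Selden-1969}) it is discrete. Hence for each $a\in\mathscr{C}(p,q)$ there is a $\tau$-open set $U$ with $U\cap\mathscr{C}(p,q)=\{a\}$, and since $\{0\}$ is closed, $U\setminus\{0\}=\{a\}$ is $\tau$-open in $S$; thus the only point of $S$ that can fail to be isolated is $0$. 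If $0$ is isolated then $\tau$ is discrete and we are done, so from now on I assume $0$ is non-isolated and aim to show that $(S,\tau)$ is compact.

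The next step produces the crucial ``almost-invariance'' of a neighbourhood of the zero. Using local compactness, fix a compact neighbourhood $K$ of $0$; since $0$ is not isolated, $K$ is infinite, say $K=\{0\}\cup D$ with $D\subseteq\mathscr{C}(p,q)$ infinite. For any neighbourhood $V$ of $0$, the set $K\setminus V$ is compact and consists of isolated points, hence is finite; in particular $D\setminus V$ is finite. Applying this with $V=l_x^{-1}(K)$ and with $V=r_x^{-1}(K)$ --- both open neighbourhoods of $0$ because $x0=0x=0$ --- gives, for every $x\in\mathscr{C}(p,q)$, that $\{d\in D:xd\notin D\}$ and $\{d\in D:dx\notin D\}$ are finite. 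In other words $D$ is almost invariant under every one-sided translation of $\mathscr{C}(p,q)$.

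The hard part is then the combinatorial claim that an infinite, almost-invariant $D\subseteq\mathscr{C}(p,q)$ must be cofinite. I would identify $\mathscr{C}(p,q)$ with $\N\times\N$ via $q^{m}p^{n}\leftrightarrow(m,n)$; then left/right multiplication by $q$ and $p$ act as the coordinate shifts $(i,j)\mapsto(i\pm1,j)$ and $(i,j)\mapsto(i,j\pm1)$, with the proviso that a shift which would make a coordinate negative instead clamps it at $0$ and increments the other coordinate. From the finiteness of the four corresponding exceptional sets one extracts a constant $L$ such that (i) if $(i,j)\in D$ with $i>L$ then $\{(i+k,j):k\ge0\}\subseteq D$ (and symmetrically in the column direction), and (ii) if $(i,j)\in D$ with $j>L$ then $\{(i',j):0\le i'\le i\}\subseteq D$ (and symmetrically). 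Since $D$ is infinite it meets some fixed row, or some fixed column, in infinitely many points; translating that row (resp.\ column) by $q^{L+1}$ (resp.\ $p^{L+1}$) --- whose exceptional set is again finite --- produces a point of $D$ with both coordinates $>L$. Starting from such a point, (i) and (ii) successively fill in a whole column of $D$, then every row of index $>L$, then every column of index $>L$, so that $D\supseteq\mathscr{C}(p,q)\setminus\big([0,L]\times[0,L]\big)$. Hence $\mathscr{C}(p,q)\setminus D$ is finite, and $S=K\cup\big(\mathscr{C}(p,q)\setminus D\big)$ is a union of a compact set and a finite set, so $(S,\tau)$ is compact.

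I expect this combinatorial step to be the only genuine obstacle: for $\Z\times\Z$ the implication ``almost translation-invariant $\Rightarrow$ finite or cofinite'' is immediate, but the clamping behaviour of $p$- and $q$-multiplication at the edges of $\N\times\N$ rules out a one-line proof, and the pumping must be carried out so as to stay outside the finitely many exceptional sets produced in the previous step. The first two steps are routine, and local compactness is used only to produce the compact neighbourhood $K$.
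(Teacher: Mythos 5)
Your proposal is correct in outline and can be turned into a complete proof; note, however, that the paper does not actually prove this statement --- it imports it from Gutik's paper \cite{Gutik-2015} and uses it (through Theorem~\ref{Bard}) as the base case of the Main Theorem, so there is no in-paper proof to match. Your argument is a self-contained specialization of the machinery the paper develops for general graph inverse semigroups: your first step (nonzero points are isolated, via Bertman--West applied to the subspace $\mathscr{C}(p,q)$) is the bicyclic case of Theorem~\ref{lemma1}, which the paper proves directly from shift-continuity without invoking Bertman--West; your observation that $K\setminus V$ is finite is exactly Corollary~\ref{corol1}; and your ``almost invariance under all shifts implies cofinite'' step is the concrete $\N\times\N$ analogue of Lemmas~\ref{lemma2}--\ref{lemma8}, where the paper works instead with $\mathcal{L}$- and $\mathcal{D}$-classes and the condition $(\star)$. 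What your route buys is an elementary, coordinate-level proof of the base case; what the paper's route buys is that the same skeleton scales to all strongly connected graphs with finitely many vertices. Two small repairs are needed: $l_x^{-1}(K)$ need not be open, so you should use $l_x^{-1}(\operatorname{int}K)$ (and note $xd\neq 0$ for $x,d$ nonzero to pass from $xd\in K$ to $xd\in D$); and the claim that an infinite $D$ must meet some fixed row or fixed column infinitely often is literally false (consider the diagonal), though in the remaining case $D$ already contains a point with both coordinates exceeding $L$, so the pumping argument goes through after adding that trivial case.
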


In~\cite{Bardyla-2017(3)} Gutik's Theorem was generalized over the $\alpha$-bicyclic monoid.

One of generalizations of the bicyclic semigroup is a $\lambda$-polycyclic monoid.
For a non-zero cardinal $\lambda$, the $\lambda$-polycyclic monoid $\mathcal{P}_\lambda$ is the semigroup with identity and zero given by the presentation:
\begin{equation*}
    \mathcal{P}_\lambda=\left\langle \left\{p_i\right\}_{i\in\lambda}, \left\{p_i^{-1}\right\}_{i\in\lambda}\mid  p_i^{-1}p_i=1, p_j^{-1}p_i=0 \hbox{~for~} i\neq j\right\rangle.
\end{equation*}

Polycyclic monoid $\mathcal{P}_{k}$ over a finite cardinal $k$ was introduced in \cite{Nivat-Perrot-1970}. Algebraic properties of a semigroup $\mathcal{P}_{k}$ were investigated in \cite{Lawson-2009} and \cite{Meakin-1993}. Algebraic and topological properties of the $\lambda$-polycyclic monoid were investigated in~\cite{BardGut-2016(1)} and~\cite{BardGut-2016(2)}. In particular, it was proved that for every non-zero cardinal $\lambda$ the only locally compact semigroup topology on the $\lambda$-polycyclic monoid is the discrete topology. Observe that the bicyclic semigroup with an adjoined zero is isomorphic to the polycyclic monoid $\mathcal{P}_{1}$. Hence Gutik's Theorem~\ref{Gutik} can be reformulated in the following way: any locally compact shift-continuous topology on the polycyclic monoid $\mathcal{P}_{1}$ is either compact or discrete. In \cite{Bardyla-2016(1)} Theorem~\ref{Gutik} was generalized as follows.

 \begin{theorem}[{\cite[Main Theorem]{Bardyla-2016(1)}}]\label{Bard}
 Any locally compact shift-continuous topology on the $\lambda$-polycyclic monoid $\mathcal{P}_{\lambda}$ is either compact or discrete.
 \end{theorem}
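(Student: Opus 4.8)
The plan is to show that a Hausdorff locally compact shift-continuous topology $\tau$ on $\mathcal{P}_\lambda$ is discrete exactly when the zero $0$ is isolated, and compact otherwise, the two cases being separated once we know that every nonzero element is an isolated point. Throughout I use the standard model in which the nonzero elements of $\mathcal{P}_\lambda$ are the words $uv^{-1}$, with $u,v$ finite words in the generators $p_i$ and the identity the empty word, multiplication being given by prefix comparison and otherwise producing $0$, with the defining orthogonality $p_i^{-1}p_j=0$ for $i\ne j$. Two structural facts drive everything. First, for any word $u$ the left translation $l_u\colon x\mapsto ux$ satisfies $l_{u^{-1}}\circ l_u=l_{u^{-1}u}=l_1=\mathrm{id}$, and dually $r_{v^{-1}}\circ r_v=\mathrm{id}$; hence each one-sided translation by a word or by the inverse of a word is a continuous injection with continuous inverse on its image, that is, a homeomorphism onto its image, and $0$ is a two-sided zero fixed by every translation. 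Second, the cases $\lambda=0$ (where $\mathcal{P}_0=\{0,1\}$ is finite) and $\lambda=1$ (where $\mathcal{P}_1$ is the bicyclic monoid with adjoined zero, so the statement is exactly Theorem~\ref{Gutik}) are immediate, so I may assume $\lambda\ge 2$.

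Next I would prove that every nonzero element is isolated, in two moves. The core is to isolate the identity $1$. The submonoid generated by a fixed pair $p,p^{-1}$ is a copy of the bicyclic monoid, on which the induced topology is Hausdorff and shift-continuous; the analysis of the bicyclic semigroup in~\cite{Eberhart-Selden-1969,Bertman-West-1976}, together with local compactness, is used to force $\{1\}$ open. Concretely I would exploit that $g\colon x\mapsto p^{-1}xp$ is a continuous map fixing $1$ which strictly shortens the total length of any element whose two legs both begin with $p$; iterating $g$ on a hypothetical net converging to $1$ drives it onto a net of uniformly bounded length, and a bounded, hence finite, set of nonzero points cannot converge to $1$ in a Hausdorff space. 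Granting that $1$ is isolated, an arbitrary nonzero $a=uv^{-1}$ is isolated as well: the map $f\colon x\mapsto u^{-1}xv$ is continuous, satisfies $f(a)=1$, and its fibre $f^{-1}(1)$ is finite, consisting of the elements obtained from $a$ by deleting a common suffix of $u$ and $v$, of which there are at most $\min(|u|,|v|)+1$. Since $\{1\}$ is open, $f^{-1}(1)$ is a finite open set containing $a$, and deleting its finitely many other points (a closed set) leaves $\{a\}$ open.

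With all nonzero points isolated, one direction of the dichotomy is clear: if $0$ is isolated then $\tau$ is discrete. Assume $0$ is not isolated; then every neighbourhood of $0$ is infinite, and I claim $\tau$ is the one-point compactification of the discrete space $\mathcal{P}_\lambda\setminus\{0\}$, hence compact. By local compactness fix a compact neighbourhood $K$ of $0$. Since the only non-isolated point of $K$ is $0$, any open $V\ni 0$ has $K\setminus V$ closed in $K$, compact, and discrete, hence finite; that is, neighbourhoods of $0$ are automatically cofinite inside $K$. The remaining task is to upgrade ``cofinite in $K$'' to ``cofinite in $\mathcal{P}_\lambda$'', i.e.\ to show $\mathcal{P}_\lambda\setminus K$ is finite. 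Here I would use that the generator translations fix $0$ and are continuous, so for each such translation $s$ the set $\{x\in K:s(x)\in\operatorname{int}K\}$ is cofinite in $K$; intersecting over the finitely many generator translations (valid when $\lambda<\infty$) yields, for every $n$, a cofinite subset $W_n\subseteq K$ each point of which is carried into $K$ by all words of translations of length $\le n$. Running the reduction that strips an element one letter at a time (left multiplication by $p_i^{-1}$ to shorten $u$, right multiplication by $p_i$ to shorten $v$), together with the fact that $p_i^{-1}p_j=0$ confines each product to a single branch, should then force cofinitely many elements into $K$, giving $\mathcal{P}_\lambda\setminus K$ finite and $\mathcal{P}_\lambda$ compact.

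The main obstacle is precisely this last propagation step. Isolation of nonzero points and the ``within $K$'' cofiniteness are soft, but topologically nothing prevents an infinite, closed, discrete ``tail'' of isolated points that avoids a neighbourhood of $0$ while keeping the space locally compact and non-discrete; ruling this out must use the semigroup operation essentially, through the continuity and near-invariance of the generator translations and the orthogonality relations. I also expect genuine care for infinite $\lambda$, where one can no longer intersect over all generator translations at once, so the confinement of products to single branches via $p_i^{-1}p_j=0$, and a separate control of how many branches a compact neighbourhood can meet, will have to replace the finite-intersection argument.
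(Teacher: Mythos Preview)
This paper does not contain a proof of Theorem~\ref{Bard}; it is quoted from \cite{Bardyla-2016(1)} and then \emph{used} as an input. In the Proof of Main Theorem the one-vertex case is disposed of by invoking Theorem~\ref{Bard} directly, and Lemma~\ref{lemma4} appeals to Theorem~\ref{Bard} again to conclude that the subsemigroup $\langle C^e\rangle\cup\{0\}\cong \mathcal{P}_\lambda$ is compact. So there is no ``paper's own proof'' of this statement to compare against; what the paper does prove, in Theorem~\ref{lemma1}, is the isolation of all nonzero points in an arbitrary semitopological GIS, and that argument (via the retract $xx^{-1}G(E)\cup G(E)xx^{-1}$ and Lemma~\ref{lemmaeq}) is cleaner and more robust than your sketch with iterates of $x\mapsto p^{-1}xp$.

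As for the proposal itself, the first half (nonzero points isolated; if $0$ is isolated then discrete; neighbourhoods of $0$ are cofinite within any compact $K$) is fine and matches the general facts in this paper (Theorem~\ref{lemma1} and Corollary~\ref{corol1}). The genuine gap is exactly where you flag it: the ``propagation'' from cofinite-in-$K$ to cofinite-in-$\mathcal{P}_\lambda$. Your scheme of intersecting over the generator translations to get a cofinite $W_n\subseteq K$ stable under all words of length $\le n$ only makes sense for finite $\lambda$, and even there you have not explained why the stripping process forces \emph{cofinitely many} elements of $\mathcal{P}_\lambda$ (rather than of $K$) into $K$; nothing yet rules out an infinite closed discrete set living entirely outside $K$. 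In \cite{Bardyla-2016(1)} this is handled by a different mechanism: one shows that some $\mathcal{L}$-class meets every neighbourhood of $0$ in an infinite set, transports this to every $\mathcal{L}$-class via right translations, and then uses a maximal-length argument (pick $u_v v^{-1}\notin U$ with $|u_v|$ maximal and derive a contradiction by left-multiplying by a suitable path) to force each $\mathcal{L}$-class, and finally the whole semigroup, to be cofinitely contained in $U$. That argument is uniform in $\lambda$ and is what you are missing; your outline does not yet contain the key maximal-length contradiction that replaces the finite-intersection step.
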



For a directed graph $E=(E^{0},E^{1},r,s)$ the graph inverse semigroup (or simply GIS) $G(E)$ over $E$ is a semigroup with zero generated by the sets $E^{0}$, $E^{1}$ together with the set $E^{-1}=\{e^{-1}\mid e\in E^{1}\}$ satisfying the following relations for all $a,b\in E^{0}$ and $e,f\in E^{1}$:
 \begin{itemize}
 \item [(i)]  $a\cdot b=a$ if $a=b$ and $a\cdot b=0$ if $a\neq b$;
 \item [(ii)] $s(e)\cdot e=e\cdot r(e)=e;$
 \item [(iii)] $e^{-1}\cdot s(e)=r(e)\cdot e^{-1}=e^{-1};$
 \item [(iv)] $e^{-1}\cdot f=r(e)$ if $e=f$ and $e^{-1}\cdot f=0$ if $e\neq f$.
\end{itemize}

Graph inverse semigroups are generalizations of the polycyclic monoids. In particular, for every non-zero cardinal $\lambda$, the $\lambda$-polycyclic monoid is isomorphic to the graph inverse semigroup over the graph $E$ which consists of one vertex and $\lambda$ distinct loops.
However, in~\cite{Bardyla-2017(2)} it was proved that the $\lambda$-polycyclic monoid is a universal object in the class of graph inverse semigroups. More precisely, each GIS $G(E)$ embeds as an inverse subsemigroup into the $\lambda$-polycyclic monoid $\mathcal{P}_{\lambda}$ with  $\lambda\ge |G(E)|$.

According to~\cite[Chapter~3.1]{Jones-2011}, each non-zero element of the graph inverse semigroup $G(E)$ is of the form  $uv^{-1}$ where $u,v\in \operatorname{Path}(E)\hbox{ and } r(u)=r(v)$. A semigroup operation in $G(E)$ is defined by the formulas:
$$  u_1v_1^{-1}\cdot u_2v_2^{-1}=
    \begin{cases}
        u_1wv_2^{-1}, & \mbox{if $u_2=v_1w$ for some $w\in \operatorname{Path}(E)$};\\
        u_1(v_2w)^{-1},   & \mbox{if $v_1=u_2w$ for some $w\in \operatorname{Path}(E)$};\\
        0,              & \mbox{otherwise},
      \end{cases}$$
 and $$uv^{-1}\cdot 0=0\cdot uv^{-1}=0\cdot 0=0.$$

Simple verifications show that $G(E)$ is an inverse semigroup and $(uv^{-1})^{-1}=vu^{-1}$.

We shall say that GIS $G(E)$ satisfies condition $(\star)$ if for each infinite subset $A\subset \operatorname{Path}(E)$ there exists an infinite subset $B\subset A$ and an element $\mu\in G(E)$ such that for each $x\in B$, $\mu\cdot x\in \operatorname{Path}(E)$ and $|\mu\cdot x|>|x|$.

Graph inverse semigroups play an important role in the study of rings and $C^{*}$-algebras (see \cite{Abrams-2005,Ara-2007,Cuntz-1980,Kumjian-1998,Paterson-1999}).
Algebraic properties of graph inverse semigroups were studied in \cite{Amal-2016, Bardyla-2017(2), Jones-2011, Jones-Lawson-2014, Lawson-2009, Mesyan-2016}. In~\cite{Mesyan-Mitchell-Morayne-Peresse-2013} it was showed that a locally compact topological GIS $G(E)$ over a finite graph $E$ is discrete. In~\cite[Theorem 1]{Bardyla-2017(1)} the author characterized graph inverse semigroups admitting only discrete locally compact semigroup topology:
\begin{theorem}\label{Bard1}
  The discrete topology is the only locally compact semigroup topology on a graph inverse semigroup $G(E)$ if and only if $G(E)$ satisfies the condition~$(\star)$.
\end{theorem}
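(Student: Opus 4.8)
The plan is to prove the two implications separately. Write $\mathbf 0$ for the zero of $G(E)$, and recall that a non‑zero element of $G(E)$ has the form $uv^{-1}$ with $u,v\in\op{Path}(E)$ and $r(u)=r(v)$, while for $x\in\op{Path}(E)$ (viewed as the element $x\cdot r(x)^{-1}$) the product $uv^{-1}\cdot x$ is again a path exactly when $v$ is a prefix of $x$, in which case, writing $x=vw$, it equals the path $uw$ of length $|u|+|x|-|v|$. Hence ``$\mu x$ is a path longer than $x$'' for $\mu=pq^{-1}$ amounts to: $|p|>|q|$ and $q$ is a prefix of $x$; this reformulation is what links $(\star)$ to the combinatorics of prefixes, and I would use it throughout. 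For the sufficiency direction, let $\tau$ be a locally compact semigroup topology on $G(E)$. First I would show that every non‑zero element is isolated; it suffices to do this for a non‑zero idempotent $e=uu^{-1}$, since an arbitrary non‑zero $uv^{-1}$ can then be isolated by pulling back $\{uu^{-1}\}$ along a suitable continuous shift. For $e$ one uses the continuous map $x\mapsto exe$ together with the structure of $eG(E)e$: when $r(u)$ lies on a cycle this monoid contains a copy of the bicyclic monoid, so the Eberhart–Selden/Bertman–West argument isolates its identity $e$, and when $r(u)$ lies on no cycle the relevant set is finite. (If this lemma is available from earlier work it may simply be cited.) It then remains to prove that $\mathbf 0$ is isolated.

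Suppose $\mathbf 0$ is not isolated and fix a compact neighbourhood $U$ of $\mathbf 0$. Since $\mathbf 0$ is the only non‑isolated point of $U$, every closed subset of $U$ missing a neighbourhood of $\mathbf 0$ is finite; hence every neighbourhood of $\mathbf 0$ contains all but finitely many points of $U$, and joint continuity at $(\mathbf 0,\mathbf 0)$ yields a finite $F\subseteq U$ with $(U\setminus F)\cdot(U\setminus F)\subseteq U$. Since $U\setminus F$ is infinite, the first coordinates (or, if those form a finite set, the second coordinates) of its non‑zero elements form an infinite set of paths $A\subseteq\op{Path}(E)$; applying $(\star)$ produces an infinite $B\subseteq A$ and paths $p,q$ with $r(p)=r(q)$, $|p|>|q|$, $q$ a common prefix of every $b\in B$, and thus $\mu=pq^{-1}\in G(E)$. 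Using $\mu^{-1}\mu=qq^{-1}$ and the prefix description, the left shift $l_\mu$ is a continuous injection of the infinite set $\{y_\alpha\}\subseteq U$ of elements whose first coordinate lies in $B$, sending each $y_\alpha$ to a strictly ``longer'' element $\mu y_\alpha$, and the images all lie in the compact set $l_\mu(U)$. The point is that this should force an infinite subset of $l_\mu(U)$ consisting of isolated points with no accumulation point — one shows $\mathbf 0\notin\overline{\{\mu y_\alpha\}}$ by exhibiting a neighbourhood of $\mathbf 0$, built from the shifts $l_g^{-1}(U)$ and $r_g^{-1}(U)$, meeting it in a finite set — contradicting compactness of $l_\mu(U)$. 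I expect this last step, choosing the right auxiliary shift and bookkeeping the lengths, to be the crux of this direction.

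For the necessity direction I would argue the contrapositive: if $(\star)$ fails, then $G(E)$ carries a non‑discrete locally compact semigroup topology. Fix an infinite $A\subseteq\op{Path}(E)$ witnessing the failure, so that for every $\mu\in G(E)$ only finitely many $x\in A$ satisfy ``$\mu x$ is a path longer than $x$''; equivalently, for every path $q$ admitting a longer path with the same range, only finitely many elements of $A$ have $q$ as a prefix. Out of $A$ I would build an infinite set $P\subseteq G(E)\setminus\{\mathbf 0\}$ — the natural candidates are an appropriate set of elements ``supported on $A$'' (its idempotents, a one‑sided translate, or a suitably trimmed piece of the two‑sided ideal generated by $A$; which one works is dictated by the shape of $E$) — and put on $G(E)$ the topology $\tau_P$ in which every non‑zero element is isolated and the basic neighbourhoods of $\mathbf 0$ are the sets $\{\mathbf 0\}\cup(P\setminus F)$ with $F\subseteq P$ finite. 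This is Hausdorff, it is not discrete because $P$ is infinite, and it is locally compact because $\{\mathbf 0\}\cup P$ is a compact neighbourhood of $\mathbf 0$.

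The only nontrivial point is joint continuity of multiplication, and this is where the failure of $(\star)$ enters. Since neighbourhoods of non‑zero (isolated) points are singletons, continuity at $(g,h)$ is automatic unless $gh=\mathbf 0$, and continuity at $(g,\mathbf 0)$, $(\mathbf 0,g)$ and $(\mathbf 0,\mathbf 0)$ reduces to the finiteness of the sets $\{x\in P:gx\notin\{\mathbf 0\}\cup P\}$ and $\{x\in P:xg\notin\{\mathbf 0\}\cup P\}$ for each $g$, of $\{(x,y)\in P^2:xy\notin\{\mathbf 0\}\cup P\}$, and of $\{(x,y)\in P^2:xy=f\}$ for each $f\in P$ — the sets $\{x\in G(E):gx=f\}$ and $\{x\in G(E):xg=f\}$ being automatically finite in $G(E)$, so they cause no trouble. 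Translating each of these through the prefix formula for multiplication turns it into an assertion that certain sets of paths which are common prefixes (or suffixes) of cofinally many members of $A$ must be finite, or that a common prefix cannot be ``lengthened'', which is exactly the witnessing property of $A$. Verifying these finiteness conditions for the correct $P$ is, I expect, the main obstacle in the whole proof; once it is done, $\tau_P$ is the desired non‑discrete locally compact semigroup topology and the theorem follows.
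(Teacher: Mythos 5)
First, a remark on the comparison you asked me to make: the paper does not prove Theorem~\ref{Bard1} at all --- it quotes it from \cite{Bardyla-2017(1)} --- so there is no internal proof to measure your attempt against, and I can only assess it on its own terms. On those terms, the scaffolding is sound (the two implications, the isolation of non-zero points, the cofiniteness of neighbourhoods of $0$ inside a compact neighbourhood, and the prefix reformulation of $(\star)$ are all correct), but both steps you yourself flag as ``the crux'' are genuinely missing, and in each case the specific route you sketch runs into a concrete obstruction. For the sufficiency direction: after invoking joint continuity at $(\mathbf 0,\mathbf 0)$ to get $(U\setminus F)\cdot(U\setminus F)\subseteq U$, you never use this containment --- $\mu$ need not lie in $U\setminus F$, and everything that follows (continuity of $l_\mu$, the sets $l_g^{-1}(U)$, $r_g^{-1}(U)$, compactness of $l_\mu(U)$) is available in any merely shift-continuous topology. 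If the contradiction could be closed with only those tools, it would also rule out every non-discrete locally compact \emph{shift-continuous} topology on a $G(E)$ satisfying $(\star)$; but Lemma~\ref{lemmacomp} exhibits exactly such a topology on any infinite $G(E)$ (e.g.\ on $\mathcal{P}_2$, which satisfies $(\star)$), and there the set $\{\mu y_\alpha\}$ \emph{does} accumulate at $0$, since every neighbourhood of $0$ is cofinite. So joint continuity must enter the accumulation argument in an essential way, and you have not indicated how; this is not bookkeeping, it is the whole content of the implication.

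For the necessity direction, your reformulation of the failure of $(\star)$ is correct and $\tau_P$ has the right shape, but the entire difficulty is the choice of $P$, which you leave open, and the candidates you name do not work in general. For instance, the set of idempotents $P=\{aa^{-1}\mid a\in A\}$ fails: take $E^0=\{x_0,x_1\}\cup\{y_i\}_{i\in\N}$ with two parallel edges $e,e'$ from $x_0$ to $x_1$ and edges $f_i$ from $x_1$ to $y_i$; then $(\star)$ fails at $A=\{ef_i\}_{i\in\N}$ (no path into $x_0$, $x_1$ or $y_i$ is long enough to lengthen any $ef_i$), yet $e'e^{-1}\cdot (ef_i)(ef_i)^{-1}=(e'f_i)(ef_i)^{-1}\notin P\cup\{0\}$ for every $i$, so the left shift by $e'e^{-1}$ is discontinuous at $0$ in $\tau_P$. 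Thus $P$ must be saturated under all such translations, and one must then re-verify the condition at $(\mathbf 0,\mathbf 0)$, which is a real constraint: as the bicyclic monoid shows, $\{(x,y)\mid xy=f\}$ can be infinite in $G(E)$, and it is precisely $\neg(\star)$ that must prevent this for the enlarged $P$. Until a specific $P$ is produced and these finiteness conditions are checked, this direction is also unproved.
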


Further we shall often use the following fact proved in~\cite[Lemma 1]{Mesyan-Mitchell-Morayne-Peresse-2013}:
\begin{lemma}\label{lemmaeq}
For any $a,b\in G(E)\setminus\{0\}$, the sets $\{x\in G(E)\mid x\cdot a=b\}$ and $\{x\in G(E)\mid a\cdot x=b\}$ are finite.
\end{lemma}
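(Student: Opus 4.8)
The plan is to compute directly in the explicit normal form for $G(E)$ and reduce the statement to the obvious fact that a fixed path has finitely many prefixes and finitely many suffixes. Write $a=u_2v_2^{-1}$ and $b=u_1v_1^{-1}$ in normal form, where $u_1,v_1,u_2,v_2\in\operatorname{Path}(E)$ with $r(u_1)=r(v_1)$ and $r(u_2)=r(v_2)$; vertices are just paths of length zero, so no case needs separate treatment. Since $b\neq 0$, the zero element is never a solution, so any solution of $x\cdot a=b$ has the form $x=pq^{-1}$ with $p,q\in\operatorname{Path}(E)$ and $r(p)=r(q)$.

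Next I would substitute $x=pq^{-1}$ into $x\cdot a=b$ and run the three branches of the multiplication formula. If $u_2=qw$ for some $w\in\operatorname{Path}(E)$, the product is $pwv_2^{-1}$, and by uniqueness of the normal form the equality $pwv_2^{-1}=u_1v_1^{-1}$ forces $v_1=v_2$ and $pw=u_1$; thus $w$ is simultaneously a suffix of $u_1$ and of $u_2$, there are only finitely many such $w$, and each one determines both $p$ (the prefix of $u_1$ complementary to $w$) and $q$ (the prefix of $u_2$ complementary to $w$) uniquely, so this branch gives finitely many $x$. If instead $q=u_2w$, the product is $p(v_2w)^{-1}$, and equality with $u_1v_1^{-1}$ forces $p=u_1$ and $v_2w=v_1$, which pins down $w$ and hence $q$ uniquely, so there is at most one such $x$. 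The remaining branch gives $0\neq b$, which is impossible. Hence $\{x\in G(E)\mid x\cdot a=b\}$ is finite.

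For $\{x\in G(E)\mid a\cdot x=b\}$ I would not repeat the computation. Since $G(E)$ is an inverse semigroup with $(uv^{-1})^{-1}=vu^{-1}$, inversion is an involutive bijection of $G(E)$, and $a\cdot x=b$ is equivalent to $x^{-1}\cdot a^{-1}=b^{-1}$; therefore this set is in bijection with $\{y\in G(E)\mid y\cdot a^{-1}=b^{-1}\}$, which is finite by the first part applied to $a^{-1},b^{-1}\in G(E)\setminus\{0\}$.

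There is no real obstacle here: the only thing to be careful about is the bookkeeping in the first part, namely checking in each branch that the surviving constraints determine the two path-factors of $x$ up to finitely many choices and that the resulting $pq^{-1}$ is an honest element of $G(E)$ (one verifies $r(p)=r(q)=s(w)$ in the first branch, and $r(p)=r(q)$ directly in the second). Everything else is a routine unwinding of the definitions.
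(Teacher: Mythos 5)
Your proof is correct. Note that the paper does not prove this lemma at all: it is quoted verbatim from Lemma~1 of the Mesyan--Mitchell--Morayne--P\'eresse paper, so there is no in-text argument to compare against. Your case analysis on the multiplication formula is the standard way to establish it --- the key points (uniqueness of the normal form $uv^{-1}$, the fact that a path has only finitely many suffixes, and the reduction of the left-division case to the right-division case via the involution $x\mapsto x^{-1}$) are all handled correctly, so your write-up would serve as a complete self-contained substitute for the citation.
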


\section{Main results}
Let $G(E)$ be the graph inverse semigroup over a graph $E$. Fix an arbitrary vertex $e\in E^0$ and let $C^e:=\{u\in \op{Path}(E)\mid s(u)=r(u)=e\}$.
Put $$C_1^e:=\{u\in C^e\mid r(v)\neq e\hbox{ for each non-trivial prefix } v \hbox{ of } u\}.$$
By $\langle C^e\rangle$ (resp. $\langle C_1^e\rangle$) we denote the inverse subsemigroup of $G(E)$ which is generated by the set $C^e$
(resp. $C_1^e$). Observe that $e\in C_1^e$ and $e$ is the identity in $\langle C^e\rangle$.

\begin{lemma}\label{lemma0}
For each vertex $e\in E^0$ of an arbitrary graph $E$ the following statements hold:
\begin{itemize}
\item[$1)$] if $C_1^e=\{e\}$ then $\langle C^e\rangle=\{e\}$;
\item[$2)$] if $|C_1^e\setminus\{e\}|=1$ then $\langle C^e\rangle$ is isomorphic to the bicyclic monoid;
\item[$3)$] if $|C_1^e\setminus\{e\}|=\lambda>1$ then  $\langle C^e\rangle$ is isomorphic to the $\lambda$-polycyclic monoid $\mathcal{P}_{\lambda}$.
\end{itemize}
\end{lemma}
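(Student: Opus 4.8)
The plan is to identify the monoid $C^e\subseteq\op{Path}(E)$ as a \emph{free} monoid on the set $\Lambda:=C_1^e\setminus\{e\}$ and then to read the structure of $\langle C^e\rangle$ off the multiplication formula of $G(E)$. The first and main step is to prove that every $u\in C^e$ with $|u|>0$ has a \emph{unique} factorisation $u=a_1\cdots a_n$ with all $a_i\in\Lambda$. For existence, write $u=e_1\cdots e_k$ and let $i$ be least with $r(e_i)=e$ (such $i$ exists because $r(e_k)=r(u)=e$); then minimality gives $v:=e_1\cdots e_i\in C_1^e$ with $v\neq e$, while $u=vu'$ with $u'\in C^e$ strictly shorter, so induction on length applies. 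For uniqueness, if $a_1\cdots a_n=b_1\cdots b_m$ with $a_i,b_j\in\Lambda$, then $a_1,b_1$ are nontrivial $C^e$-prefixes of the same path, so the shorter of the two is a nontrivial prefix of the longer which returns to $e$; by the definition of $C_1^e$ this forces $a_1=b_1$, and cancellation plus induction finishes the argument. Consequently the assignment $u=a_1\cdots a_n\mapsto(a_1,\dots,a_n)$ is an isomorphism of $C^e$ onto the free monoid $F_\Lambda$ on $\Lambda$ (with $e$ going to the empty word), and the $C^e$-prefixes of $u=a_1\cdots a_n$ are exactly $e,a_1,a_1a_2,\dots,a_1\cdots a_n$.

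Next I would describe $\langle C^e\rangle$ concretely. Since $C^e$ is closed under concatenation and all its members are cycles at $e$, the $G(E)$-multiplication formula shows that $\{uv^{-1}\mid u,v\in C^e\}\cup\{0\}$ is an inverse subsemigroup of $G(E)$: one has $(uv^{-1})^{-1}=vu^{-1}$, and in a product $(u_1v_1^{-1})(u_2v_2^{-1})$ the intermediate path $w$ is forced to begin and end at $e$, so the product is again of the form $u'v'^{-1}$ with $u',v'\in C^e$, or is $0$. Since this set contains $C^e$ it contains $\langle C^e\rangle$; conversely each $uv^{-1}$ with $u,v\in C^e$ equals the product $u\cdot v^{-1}$ of two elements of $C^e\cup(C^e)^{-1}$, so $\langle C^e\rangle\supseteq\{uv^{-1}\mid u,v\in C^e\}$. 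Thus $\langle C^e\rangle$ is $\{uv^{-1}\mid u,v\in C^e\}$, together with $0$ exactly when $0$ occurs as a product of elements of $C^e\cup(C^e)^{-1}$. Transporting the multiplication along the isomorphism $C^e\cong F_\Lambda$ of the first step — using that for $u_1,u_2\in C^e$, $u_1$ is a prefix of $u_2$ in $\op{Path}(E)$ iff the word of $u_1$ is a prefix of the word of $u_2$ in $F_\Lambda$ — this semigroup is carried precisely onto the graph inverse semigroup over the one-vertex graph with $|\Lambda|$ loops, with its canonical multiplication.

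It remains to split into the three cases. If $\Lambda=\varnothing$, the first step gives $C^e=\{e\}$, whence $\langle C^e\rangle=\{e\}$, which is (1). If $\Lambda=\{a\}$, then $F_\Lambda\cong(\N,+)$ and $\langle C^e\rangle=\{a^m(a^n)^{-1}\mid m,n\in\N\}$; the ``otherwise'' clause of the multiplication never occurs, since for any $m,n$ one of $a^m,a^n$ is a prefix of the other, so $\langle C^e\rangle$ carries exactly the bicyclic multiplication and is isomorphic to ${\mathscr C}(p,q)$ — in particular the adjoined zero of $\mathcal P_1$ does \emph{not} lie in $\langle C^e\rangle$ — which is (2). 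Finally, if $|\Lambda|=\lambda>1$, a bijection $\Lambda\to\lambda$ extends to an isomorphism of $F_\Lambda$ with the free monoid on $\{p_i\}_{i\in\lambda}$, and under the identification of the second step this becomes an isomorphism $\langle C^e\rangle\to\mathcal P_\lambda$; here $0$ does lie in $\langle C^e\rangle$, because $(ea_1^{-1})(a_2e^{-1})=0$ for distinct $a_1,a_2\in\Lambda$, matching the relation $p_i^{-1}p_j=0$. This is (3).

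The only genuinely delicate point is the unique factorisation of cycles through first-return cycles in the first step; once $C^e$ is known to be free on $C_1^e\setminus\{e\}$, everything else is a mechanical translation through the $G(E)$-multiplication formula. The one thing to keep in mind is the bookkeeping in case (2): one must verify that the zero of $\mathcal P_1$ is never produced, so that the answer there is the bicyclic monoid itself rather than $\mathcal P_1$.
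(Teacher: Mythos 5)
Your proposal is correct and follows essentially the same route as the paper: factor each cycle at $e$ into first-return cycles from $C_1^e\setminus\{e\}$, identify $\langle C^e\rangle$ with $\{uv^{-1}\mid u,v\in C^e\}$ (plus $0$ where applicable), and match the resulting multiplication with that of the bicyclic or $\lambda$-polycyclic monoid. You merely make explicit two points the paper treats as obvious — the uniqueness of the factorisation (which underlies the paper's claim that its map $f$ is a bijection) and the fact that $0$ is not produced in case $(2)$ — so no substantive difference.
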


\begin{proof}
Fix an arbitrary vertex $e\in E^0$.
The statement $1$ is obvious.

Now we prove the statement $3$. Suppose that $|C_1^e\setminus\{e\}|=\lambda>1$. Let $C_1^e\setminus\{e\}=\{u_{\alpha}\}_{\alpha\in \lambda}$ be an enumeration of $C_1^e\setminus\{e\}$. For convenience we put $e=u_{-1}$.
Observe that for each element $v\in C^e$ there exist elements $u_{\alpha_1}, u_{\alpha_2},\ldots, u_{\alpha_n}\in C_1^e$ such that $v=u_{\alpha_1}u_{\alpha_2}\ldots u_{\alpha_n}$. Simple verifications show that
$$\langle C_1^e\rangle=\{uv^{-1}\mid u,v\in C^e\}\cup\{0\}=\langle C^e\rangle.$$

Let $G=\{p_{\alpha}\}_{\alpha\in \lambda}\cup\{p_{\alpha}^{-1}\}_{\alpha\in \lambda}$ be the set of generators of $\mathcal{P}_{\lambda}$.
We define a map $f:C_1^e\rightarrow \mathcal{P}_{\lambda}$ in the following way:
$f(u_{-1})=1$ and $f(u_{\alpha})=p_{\alpha}$ for each $\alpha\in\lambda$. Extend the map $f$ on the set $\langle C^e\rangle$ in the following way:  for each element $u=u_{\alpha_1}u_{\alpha_2}\ldots u_{\alpha_n}\in C^e$ put $f(u)=p_{\alpha_{1}}p_{\alpha_{2}}\ldots p_{\alpha_{n}}$.
For each non-zero element $uv^{-1}\in \langle C^e\rangle$ put $f(uv^{-1})=f(u)f(v)^{-1}$ and $f(0)=0$. Obviously, $f$ is a bijection. Let us show that $f$ is a homomorphism. Fix  arbitrary elements $ab^{-1},cd^{-1}\in \langle C^e\rangle$, where
$$a=u_{\alpha_1}\ldots u_{\alpha_n}, b=u_{\beta_1}\ldots u_{\beta_{m}}, c=u_{\gamma_1},\ldots u_{\gamma_k}, d=u_{\delta_1}\ldots u_{\delta_t}.$$ There are three cases to consider:
\begin{itemize}
\item[$(1)$] $ab^{-1}\cdot cd^{-1}=ac_1d^{-1}$, i.e., $c=bc_1$;
\item[$(2)$] $ab^{-1}\cdot cd^{-1}=a(db_1)^{-1}$, i.e., $b=cb_1$;
\item[$(3)$] $ab^{-1}\cdot cd^{-1}=0$.
\end{itemize}
Suppose that case $(1)$ holds, i.e., $u_{\gamma_1},\ldots u_{\gamma_k}=u_{\beta_1}\ldots u_{\beta_{m}}u_{\gamma_{m+1}}\ldots u_{\gamma_{k}}$. Observe that
\begin{equation*}
f(ac_1d^{-1})=f(u_{\alpha_1}\ldots u_{\alpha_n}u_{\gamma_{m+1}}\ldots u_{\gamma_{k}})f(u_{\delta_1}\ldots u_{\delta_t})^{-1}=p_{\alpha_1}\ldots p_{\alpha_n}p_{\gamma_{m+1}}\ldots p_{\gamma_{k}}(p_{\delta_1}\ldots p_{\delta_t})^{-1}.
\end{equation*}
On the other hand
\begin{equation*}
\begin{split}
&f(ab^{-1})\cdot f(cd^{-1})=p_{\alpha_1}\ldots p_{\alpha_n}(p_{\beta_1}\ldots p_{\beta_m})^{-1}\cdot p_{\beta_1}\ldots p_{\beta_{m}}p_{\gamma_{m+1}}\ldots p_{\gamma_{k}}\cdot (p_{\delta_1}\ldots p_{\delta_t})^{-1}=\\
&= p_{\alpha_1}\ldots p_{\alpha_n}p_{\gamma_{m+1}}\ldots p_{\gamma_{k}}(p_{\delta_1}\ldots p_{\delta_t})^{-1}=f(ac_1d^{-1}).
\end{split}
\end{equation*}

Case $(2)$ is similar to case $(1)$. Consider case $(3)$. In this case there exists a positive integer $i$ such that $u_{\beta_j}=u_{\gamma_j}$ for every $j<i$ and $u_{\beta_i}\neq u_{\gamma_i}$. Observe that $f(ab^{-1}\cdot cd^{-1})=f(0)=0$.
\begin{equation*}
\begin{split}
&f(ab^{-1})\cdot f(cd^{-1})= p_{\alpha_1}\ldots p_{\alpha_n}p_{\beta_m}^{-1}\ldots p_{\beta_i}^{-1}(p_{\beta_{i-1}}^{-1}\ldots p_{\beta_{1}}^{-1}\cdot p_{\beta_1}\ldots p_{\beta_{i-1}})p_{\gamma_{i}}\ldots p_{\gamma_{k}}\cdot (p_{\delta_1}\ldots p_{\delta_t})^{-1}=\\
&=p_{\alpha_1}\ldots p_{\alpha_n}p_{\beta_m}^{-1}\ldots (p_{\beta_i}^{-1}\cdot p_{\gamma_{i}})\ldots p_{\gamma_{k}}\cdot (p_{\delta_1}\ldots p_{\delta_t})^{-1}=0=f(ab^{-1}\cdot cd^{-1}).\\
\end{split}
\end{equation*}
Hence map $f$ is an isomorphism.

Proof of statement $2$ is similar to that of the statement $3$.
\end{proof}

The following Theorem extends Theorem~3 from \cite{Mesyan-Mitchell-Morayne-Peresse-2013} and Proposition~3.1 from \cite{BardGut-2016(1)} over the case of semitopological graph inverse semigroups.
\begin{theorem}\label{lemma1}
Let $G(E)$ be a semitopological GIS.
Then each non-zero element of $G(E)$ is an isolated point in $G(E)$.
\end{theorem}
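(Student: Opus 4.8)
The plan is to show that for any non-zero element $ab^{-1}\in G(E)$ (with $a,b\in\op{Path}(E)$, $r(a)=r(b)$) the singleton $\{ab^{-1}\}$ is open. The key observation is that in a semitopological semigroup we may multiply a neighbourhood of a point on the left and on the right by fixed elements, and this gives continuous maps; pulling back open sets through such shifts lets us manufacture a small neighbourhood of $ab^{-1}$. Concretely, I would first reduce to the idempotent case: if $v:=r(a)=r(b)\in E^0$ is isolated, then since the left shift $l_{ab^{-1}}$ and the right shift $r_{ba^{-1}}$ are continuous, and $ab^{-1}\cdot v\cdot (ba^{-1})$... — wait, better to go the other way. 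Note $a^{-1}\cdot(ab^{-1})\cdot b = r(a)\cdot b^{-1}\cdot b$; let me instead use that $(ab^{-1})\mapsto a^{-1}(ab^{-1})b^{-1}$... I will simply use: the map $x\mapsto a^{-1}xb$ is continuous (composition of a left and a right shift) and sends $ab^{-1}$ to $v=r(a)$. So if $\{v\}$ is open, its preimage is an open set $W\ni ab^{-1}$; intersecting $W$ with the open set $(r_{b})^{-1}\big((l_{a})^{-1}(\cdots)\big)$ is not yet enough, because the preimage of $\{v\}$ under $x\mapsto a^{-1}xb$ may be large. Here is where Lemma~\ref{lemmaeq} enters: I would instead argue that it suffices to prove every \emph{idempotent} (equivalently, every vertex $v\in E^0$, viewed as an element of $G(E)$) is isolated, and then bootstrap.

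So the heart of the argument is: \emph{every vertex $v\in E^0$ is an isolated point of $G(E)$.} To prove this, fix $v\in E^0$ and consider the continuous self-map $\varphi\colon G(E)\to G(E)$, $\varphi(x)=vxv$ (a composition of the left shift by $v$ and the right shift by $v$); its image lies in $vG(E)v$, the set of elements $uw^{-1}$ with $s(u)=s(w)=v$, together with $0$. On this set $\varphi$ acts as a retraction. Now I claim $vG(E)v$, with the subspace topology, is again a semitopological semigroup isomorphic to $\langle C^v\rangle$ with a zero behaviour — indeed $vG(E)v=\langle C^v\rangle$ by the computation in the proof of Lemma~\ref{lemma0}. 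By Lemma~\ref{lemma0}, $\langle C^v\rangle$ is either trivial, bicyclic, or a $\lambda$-polycyclic monoid. In each of these cases the identity $v$ is known to be isolated in any Hausdorff shift-continuous topology: for the trivial case it is obvious, and for the bicyclic / polycyclic cases this is exactly the content of the Bertman–West result and Theorem~\ref{Bard} (or rather the ingredients behind it). Hence $\{v\}$ is open in $vG(E)v$, so $\varphi^{-1}(\{v\})$ is open in $G(E)$; but $v\in\varphi^{-1}(\{v\})$, giving an open neighbourhood $U$ of $v$.

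That open set $U$ need not equal $\{v\}$, so the final step uses Lemma~\ref{lemmaeq} together with a separation/finiteness argument: I would show that any element $x\in G(E)\setminus\{0\}$ satisfies $vxv=v$ only for $x$ ranging over a set on which the vertex $v$ can be separated. More cleanly: for a general non-zero $ab^{-1}$, the map $x\mapsto a\,x\,b^{-1}$... hmm, rather $x\mapsto axb$ with $s(a^{-1})=\dots$; the clean statement is that $\psi(x)=a^{-1}xb$ is continuous with $\psi(ab^{-1})=r(a)=v$, so $\psi^{-1}(U)$ is an open neighbourhood of $ab^{-1}$. Finally, by Lemma~\ref{lemmaeq} the set $\{x : a^{-1}xb = b'\}$ is finite for each $b'$, so $\psi^{-1}(U)$ differs from $\psi^{-1}(\{v\})$ by control we can use, and $\psi^{-1}(\{v\})$ itself — using $aa^{-1}xbb^{-1}=x$ on the relevant piece plus finiteness — is a finite set containing $ab^{-1}$; in a Hausdorff space a point in a finite set which is contained in an open set can be separated off, so $\{ab^{-1}\}$ is open. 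The main obstacle I expect is precisely this last bookkeeping: passing from "$v$ is isolated" to "$ab^{-1}$ is isolated" requires showing the relevant preimage is finite (not merely open), and the right tool for that is Lemma~\ref{lemmaeq} applied to the left and right shifts by $a$ and $b$; making sure the shifts chosen genuinely pull $ab^{-1}$ to the idempotent $v$ and that Hausdorffness then isolates the point is the step that needs care rather than the reduction to Lemma~\ref{lemma0}, which is routine.
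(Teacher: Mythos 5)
Your reduction of the general case to the vertex case is essentially the paper's argument and is sound: for $ab^{-1}\neq 0$ the map $\psi(x)=a^{-1}xb$ is continuous, $\psi(ab^{-1})=r(a)$, and once the vertex $r(a)$ is isolated the preimage $\psi^{-1}(\{r(a)\})$ is an open neighbourhood of $ab^{-1}$ which two applications of Lemma~\ref{lemmaeq} show to be finite (no intermediate product can be $0$ since the total product is $r(a)\neq 0$); a finite open set in a Hausdorff space consists of isolated points. That part needs no repair.

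The gap is in your treatment of the vertex case, and it is a genuine one. Your identification $vG(E)v=\langle C^v\rangle$ is false in general: $vG(E)v$ consists of all $uw^{-1}$ with $s(u)=s(w)=v$ and $r(u)=r(w)$ \emph{arbitrary}, whereas $\langle C^v\rangle$ only contains those $uw^{-1}$ with $u,w$ cycles at $v$. The theorem is stated for an arbitrary graph $E$, so take for instance a vertex $v$ with a single edge $x$ to a vertex $f$ carrying infinitely many loops and no path back to $v$: then $\langle C^v\rangle=\{v\}$ while $vG(E)v$ is infinite, Lemma~\ref{lemma0} tells you nothing about $vG(E)v$, and the statement ``$v$ is isolated in $\langle C^v\rangle$'' is vacuous and does not transfer to $G(E)$. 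Even where the identification does hold, your appeal to Theorem~\ref{Bard} is illegitimate here because that theorem assumes local compactness, which is not a hypothesis of the present statement (the usable prior results would be Bertman--West for the bicyclic case and Proposition~3.1 of the cited paper of Bardyla--Gutik for semitopological $\mathcal{P}_\lambda$ --- but the present theorem is advertised precisely as the extension of that proposition, so the paper avoids relying on this machinery). The paper's vertex argument is instead elementary and covers all graphs: if some edge $x$ has $s(x)=a$, then $xx^{-1}G(E)$ and $G(E)xx^{-1}$ are closed retracts missing $a$, so their complement is an open $U(a)$; choosing a neighbourhood of $xx^{-1}$ missing $0$ and pulling it back through $y\mapsto xx^{-1}yxx^{-1}$ forces every $bc^{-1}$ in the resulting neighbourhood of $a$ to have $b,c$ proper prefixes of $x$, i.e.\ $b=c=a$. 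If no edge leaves $a$, one simply pulls back $G(E)\setminus\{0\}$ through $y\mapsto aya$. You would need to replace your subsemigroup argument by something of this kind for the proof to go through.
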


\begin{proof}
First we prove that each vertex $a$ of the graph $E$ is an isolated point in $G(E)$.
There are two cases to consider:
\begin{itemize}
\item[$1)$] there exists an edge $x$ such that $s(x)=a$;
\item[$2)$] the set $\{x\in E^1\mid s(x)=a\}$ is empty.
\end{itemize}
First consider the case $1$. Fix an arbitrary edge $x$ such that $s(x)=a$. Observe that both sets $xx^{-1}\cdot G(E)$ and $G(E)\cdot xx^{-1}$ are retracts of $G(E)$ and do not contain point $a$. Then $U(a)=G(E)\setminus (xx^{-1}G(E)\cup G(E)\cdot xx^{-1})$ is an open neighborhood of $a$. Fix an arbitrary open neighborhood $U(xx^{-1})$ which does not contain $0$. Since $xx^{-1}\cdot a\cdot xx^{-1}=xx^{-1}$ the continuity of left and right shifts in $G(E)$ yields an open neighborhood $V(a)\subset U(a)$ such that $xx^{-1}\cdot V(a)\cdot xx^{-1}\subset U(xx^{-1})$. Fix an arbitrary element $bc^{-1}\in V(a)$. Observe that the choice of $U(a)$ implies that $x$ is neither a prefix of $b$ nor $c$ (in the other case $bc^{-1}=xx^{-1}\cdot bc^{-1}\in xx^{-1}\cdot G(E)$ or $bc^{-1}=bc^{-1}\cdot xx^{-1} \in G(E)\cdot xx^{-1}$). Since the set $U(xx^{-1})$ does not contain $0$ we obtain that $xx^{-1}\cdot bc^{-1}\cdot xx^{-1}\neq 0$ and, as a consequence, $b$ and $c$ are prefixes of $x$. Hence  $b=c=a$ which implies that $V(a)=\{a\}$.

Next consider the case $2$. Since $a\cdot a\cdot a=a$, the continuity of left and right shifts in $G(E)$ yields an open neighborhood $V(a)$ such that $a\cdot V(a)\cdot a\subset G(E)\setminus\{0\}$. Fix an arbitrary element $bc^{-1}\in V(a)$. Since $s(b)\neq a$ and $s(c)\neq a$ we obtain that $a\cdot bc^{-1}\cdot a\neq 0$ iff $b=c=a$ which implies that $V(a)=\{a\}$.

Hence each vertex $a$ is an isolated point in $G(E)$.
Fix an arbitrary non-zero element $uv^{-1}\in G(E)$. Since $u^{-1}\cdot uv^{-1}\cdot v=v^{-1}\cdot v=r(v)$, the continuity of left and right shifts in $G(E)$ yields an open neighborhood $V$ of $uv^{-1}$ such that $u^{-1}\cdot V\cdot v\subseteq\{r(v)\}$. By Lemma~\ref{lemmaeq}, the set $u^{-1}\cdot V$ is finite. Repeating our arguments, by Lemma~\ref{lemmaeq}, the set $V$ is finite which implies that point $uv^{-1}$ is isolated in $G(E)$.
\end{proof}

Theorem~\ref{lemma1} implies the following:
\begin{corollary}\label{corol1}
Let $G(E)$ be a locally compact non-discrete semitopological GIS. Then for each compact neighborhoods $U,V$ of $0$ the set $U\setminus V$ is finite.
\end{corollary}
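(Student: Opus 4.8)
The plan is to use Theorem~\ref{lemma1} to reduce the statement to an elementary fact about compact Hausdorff spaces having a single non-isolated point. First I would record the structural observation that $0$ is the unique non-isolated point of $G(E)$: by Theorem~\ref{lemma1} every non-zero element of $G(E)$ is isolated, and since $G(E)$ is assumed non-discrete, the point $0$ cannot be isolated, so it is the only accumulation point of the whole space.

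Next I would pass to the subspace $U$ and check that $0$ is still the unique non-isolated point of $U$. Every point of $U\setminus\{0\}$ is isolated in $G(E)$ and hence isolated in $U$. On the other hand $0$ is not isolated in $U$: if $\{0\}$ were open in $U$, say $\{0\}=U\cap G$ with $G$ open in $G(E)$, then for any open $W$ with $0\in W\subseteq U$ we would get $\{0\}=W\cap G$ open in $G(E)$, contradicting non-discreteness. Thus $U$ is a compact Hausdorff space whose only non-isolated point is $0$, and $U\setminus\{0\}$ is a discrete open subset of $U$.

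The key step is then the general observation that if $K$ is a compact Hausdorff space whose only non-isolated point is $p$, then $K\setminus O$ is finite for every open neighborhood $O$ of $p$: indeed $K\setminus O$ is closed in the compact space $K$, hence compact, and it is contained in the discrete space $K\setminus\{p\}$, so it is a compact discrete space and therefore finite. I would apply this with $K=U$ and $O=U\cap O'$, where $O'$ is an open subset of $G(E)$ with $0\in O'\subseteq V$, which exists because $V$ is a neighborhood of $0$. This gives that $U\setminus O'=U\setminus(U\cap O')$ is finite, and since $O'\subseteq V$ we have $U\setminus V\subseteq U\setminus O'$, so $U\setminus V$ is finite as well. (Note that compactness of $V$ is not actually needed, only that $V$ is a neighborhood of $0$.) The only mildly delicate point is the verification that $0$ remains non-isolated in the subspace $U$; the rest is routine.
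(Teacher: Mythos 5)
Your argument is correct and is exactly the intended one: the paper states the corollary as an immediate consequence of Theorem~\ref{lemma1}, and the implicit proof is precisely your observation that $U\setminus O'$ is a closed, hence compact, subset of $U$ consisting entirely of isolated points and is therefore finite. The extra verification that $0$ is non-isolated in $U$ is harmless but, as you note, not needed for the conclusion.
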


\begin{lemma}\label{lemmacomp}
Each infinite GIS $G(E)$ admits a unique compact non-discrete shift-continuous topology $\tau$. Moreover, the inversion is continuous in $(G(E),\tau)$.
\end{lemma}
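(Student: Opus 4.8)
The plan is to show that the topology $\tau$ in question can only be the \emph{one-point compactification} of the discrete space $G(E)\setminus\{0\}$, with $0$ serving as the point at infinity: the open sets of $\tau$ are all subsets of $G(E)\setminus\{0\}$ together with all cofinite subsets of $G(E)$ containing $0$. Since $G(E)$ is infinite, $G(E)\setminus\{0\}$ is an infinite discrete space, and hence this $\tau$ is a compact Hausdorff topology in which $0$ is the unique non-isolated point, so in particular it is non-discrete. Thus the bulk of the work is to check that this $\tau$ is shift-continuous (with continuous inversion) and that it is the only such topology.

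First I would verify shift-continuity. Both shifts are automatically continuous at each non-zero element, since such a point is isolated; so it suffices to check continuity at $0$. Given a basic neighbourhood $U=G(E)\setminus F$ of $0=l_x(0)$ with $F\subseteq G(E)\setminus\{0\}$ finite, Lemma~\ref{lemmaeq} gives that $l_x^{-1}(F)=\bigcup_{b\in F}\{s\in G(E)\mid xs=b\}$ is a finite union of finite sets, hence finite, so $V:=G(E)\setminus l_x^{-1}(F)$ is a neighbourhood of $0$ with $l_x(V)\subseteq U$; the argument for right shifts is symmetric. Continuity of the inversion is easier still: it is an involution fixing $0$, a homeomorphism of the discrete part, and it carries the cofinite neighbourhood $U$ of $0$ onto the cofinite neighbourhood $U^{-1}$ of $0$.

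For uniqueness, let $\tau'$ be any compact non-discrete shift-continuous topology on $G(E)$. Then $(G(E),\tau')$ is a semitopological GIS, so by Theorem~\ref{lemma1} every non-zero element is $\tau'$-isolated; since $\tau'$ is not discrete, $0$ must be the unique non-isolated point, whence every subset of $G(E)\setminus\{0\}$ is $\tau'$-open. If $W$ is a $\tau'$-neighbourhood of $0$, then $G(E)\setminus W$ is a closed subspace of the compact space $(G(E),\tau')$ lying inside the discrete open set $G(E)\setminus\{0\}$, hence finite, so $W$ is cofinite. Conversely, if $W$ is cofinite and $0\in W$, then $G(E)\setminus W$ is finite, hence $\tau'$-closed since the space is Hausdorff, so $W$ is $\tau'$-open. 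Therefore $\tau'$ has exactly the open sets described above, i.e. $\tau'=\tau$.

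The only step using anything beyond elementary point-set topology is the continuity of the shifts at $0$, which rests entirely on Lemma~\ref{lemmaeq}; I do not anticipate a genuine obstacle, the one thing to be careful about being which half of the uniqueness argument invokes compactness (neighbourhoods of $0$ are cofinite) and which invokes only the Hausdorff axiom together with Theorem~\ref{lemma1} (cofinite sets containing $0$ are open).
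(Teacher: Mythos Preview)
Your proposal is correct and follows essentially the same route as the paper: define $\tau$ as the one-point compactification at $0$, and establish shift-continuity at $0$ via Lemma~\ref{lemmaeq} (the paper phrases it as ``$B=\{ab^{-1}\mid uv^{-1}\cdot ab^{-1}\in A\}$ is finite'', which is exactly your preimage argument). Your treatment is in fact more complete than the paper's: the paper's proof only verifies that this $\tau$ is shift-continuous with continuous inversion and does \emph{not} explicitly argue uniqueness, whereas you supply the uniqueness argument by invoking Theorem~\ref{lemma1} to force every non-zero point to be isolated and then using compactness/Hausdorffness to pin down the neighbourhood filter at $0$.
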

\begin{proof}
The topology $\tau$ is defined in the following way: each non-zero element is isolated in $(G(E),\tau)$ and an open neighborhood base of $0$ consists of cofinite subsets of $G(E)$ which contain $0$.
Since for each open neighborhood $V$ of $0$, the set $V^{-1}$ is cofinite in $G(E)$ and contains $0$ we obtain that the inversion is continuous in $(G(E),\tau)$. To prove the continuity of left and right shifts in $(G(E),\tau)$ we need to check it at the unique non-isolated point $0$.
Fix an arbitrary non-zero element $uv^{-1}\in G(E)$ and an open neighborhood $U$ of $0$. By the definition of topology $\tau$ the set $A=G(E)\setminus U$ is finite. By Lemma~\ref{lemmaeq}, the set $B=\{ab^{-1}\in G(E)\mid uv^{-1}\cdot ab^{-1}\in A\}$ is finite and, obviously, does not contain $0$. Then $V=G(E)\setminus B$ is an open neighborhood of $0$ such that $uv^{-1}\cdot V\subseteq U$. Hence left shifts are continuous in $(G(E),\tau)$. Continuity of right shifts in $G(E)$ can be proved similarly.
\end{proof}

Let $G(E)$ be an arbitrary GIS and $\mathcal{L}, \mathcal{R}, \mathcal{D}$ be the Green relations on $G(E)$. By Lemma~3.1.13 from~\cite{Jones-2011} for any two non-zero elements $ab^{-1}$ and $cd^{-1}$ of $G(E)$ the following conditions hold:
\begin{itemize}
\item[(1)] $ab^{-1}\mathcal{L} cd^{-1}$ iff $b=d$;
\item[(2)] $ab^{-1}\mathcal{R} cd^{-1}$ iff $a=c$;
\item[(3)] $ab^{-1}\mathcal{D} cd^{-1}$ iff $r(a)=r(b)=r(c)=r(d)$.
\end{itemize}

Further, for a path $u\in \op{Path}(E)$ by $L_u$ (resp. $R_u$) we denote an $\mathcal{L}$-class (resp. $\mathcal{R}$-class) which contains the element $uu^{-1}$. For a vertex $e\in E^0$ by $D_e$ denote the $\mathcal{D}$-class containing $e$. The condition $(3)$ implies that each non-zero $\mathcal{D}$-class contains exactly one vertex.

Recall that GIS $G(E)$ satisfies the condition $(\star)$ if for each infinite subset $A\subset \operatorname{Path}(E)$ there exists an infinite subset $B\subset A$ and an element $\mu\in G(E)$ such that for each $x\in B$, $\mu\cdot x\in \operatorname{Path}(E)$ and $|\mu\cdot x|>|x|$.

\begin{lemma}\label{lemma2}
Let $G(E)$ be a locally compact non-discrete semitopological GIS satisfying the condition~$(\star)$.
Then there exists an element $v\in\op{Path}(E)$ such that for each open compact neighborhood $U$ of $0$ the set $L_v\cap U$ is infinite.
\end{lemma}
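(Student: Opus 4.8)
The plan is to argue by contradiction, using only that any two compact neighbourhoods of $0$ differ by a finite set (Corollary~\ref{corol1}) and that non-zero points are isolated (Theorem~\ref{lemma1}). First, open compact neighbourhoods of $0$ exist: if $K$ is a compact neighbourhood of $0$, then $K\setminus\op{int}(K)$ is a compact subset of $G(E)\setminus\{0\}$, hence a finite discrete set by Theorem~\ref{lemma1}, so $\op{int}(K)$ is clopen and compact. By Corollary~\ref{corol1}, $L_v\cap U$ is infinite for one open compact neighbourhood $U$ of $0$ if and only if it is infinite for every such $U$; so it suffices to fix one open compact neighbourhood $U$ of $0$ and exhibit $v$ with $L_v\cap U$ infinite. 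Suppose not: $L_v\cap U$ is finite for all $v\in\op{Path}(E)$. Since $G(E)$ is non-discrete, $0$ is not isolated, so $U\setminus\{0\}$ is infinite; as $U\setminus\{0\}=\bigsqcup_v(L_v\cap U)$ is a disjoint union of finite sets, infinitely many pairwise distinct paths occur as right coordinates of elements of $U$. Fix an infinite family $a_jv_j^{-1}\in U$ $(j\in J)$ with the $v_j$ pairwise distinct.

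The key tool is the following \emph{translation principle}: if $f\colon G(E)\to G(E)$ is continuous with $f(0)=0$, then $f(x)\in U$ for all but finitely many $x\in U$. Indeed $f^{-1}(U)\cap U$ is a closed subset of the compact set $U$ and a neighbourhood of $0$, hence a compact neighbourhood of $0$, so $U\setminus f^{-1}(U)$ is finite by Corollary~\ref{corol1}. This applies to all left and right shifts $l_z,r_z$ and their compositions.

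Apply $(\star)$ to $\{v_j:j\in J\}$: after shrinking $J$ to an infinite subset we obtain $\mu=pq^{-1}$ with $|p|>|q|$ such that $v_j=qw_j$ and $\mu v_j=pw_j$ for all $j\in J$, with the $w_j$ pairwise distinct. A direct computation in $G(E)$ gives $r_{\mu^{-1}}(a_jv_j^{-1})=a_j(\mu v_j)^{-1}$, whose left coordinate is still $a_j$ and whose right coordinate is strictly longer than $v_j$. By the translation principle, after discarding finitely many $j$ the element $a_j(\mu v_j)^{-1}$ again lies in $U$. The right coordinates $\{\mu v_j\}$ are again infinitely many distinct paths, so this step can be iterated: we obtain infinite sets $J\supseteq J_1\supseteq J_2\supseteq\cdots$ and, for $j\in J_n$, an element of $U$ with left coordinate $a_j$ whose right coordinate has length $\ge|v_j|+n$. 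Choosing $j_n\in J_n\setminus\{j_1,\dots,j_{n-1}\}$ shows that for each $n$ the set $R_{a_{j_n}}\cap U$ has at least $n$ elements, so $\sup_u|R_u\cap U|=\infty$; the symmetric construction (applying $(\star)$ to left coordinates and using left shifts, which preserve right coordinates) gives $\sup_v|L_v\cap U|=\infty$ as well.

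The remaining step — and the one I expect to be the main obstacle — is to upgrade this unbounded growth into an actual contradiction, i.e.\ to trap \emph{infinitely many} elements of $U$ inside a single $\mathcal{L}$-class, which would contradict the standing assumption. The trouble is that the naive iteration above loses a finite, possibly non-empty, set of indices at each stage, so only for each fixed $n$ separately does one get $n$ elements in a common $\mathcal{L}$- or $\mathcal{R}$-class, with the class depending on $n$. To pin down a single class one must organise the iteration so that one index (equivalently, one right coordinate $v_j$, up to the bounded surgery performed by the shifts) survives the whole process; this requires more than Corollary~\ref{corol1}, which only controls the losses one step at a time. One should instead use that $U$ is compact, so that the infinite sets of elements produced along the way can only cluster at $0$, and combine this with Lemma~\ref{lemmaeq} to bound how the shifts move things. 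Carrying this out rigorously is the heart of the argument.
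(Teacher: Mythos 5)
Your proposal does not close: the step you yourself flag as ``the heart of the argument'' is exactly the missing idea, and the machinery you set up beforehand (lengthening \emph{right} coordinates via $r_{\mu^{-1}}$, iterating, counting elements in $\mathcal{R}$-classes) points in a direction from which the contradiction cannot be reached. Each application of $r_{\mu^{-1}}$ changes the right coordinate, hence moves the element to a \emph{different} $\mathcal{L}$-class, so no bookkeeping of surviving indices will ever trap infinitely many elements of $U$ in a single $L_v$; at best you get, for each $n$, some class meeting $U$ in $n$ points, which is consistent with all the intersections $L_v\cap U$ being finite. The correct move is to apply $(\star)$ to the \emph{left} coordinates and to set up an extremal argument that yields a contradiction in one step, with no iteration.

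Concretely: assuming each $L_v\cap U$ is finite (your reduction to a single open compact $U$ via Corollary~\ref{corol1} is fine, as is your observation that such $U$ exists), let $T=\{v\mid L_v\cap U\neq\emptyset\}$, which is infinite, and for each $v\in T$ choose $u_vv^{-1}\in L_v\cap U$ with $|u_v|$ \emph{maximal} in the finite set $L_v\cap U$. Apply $(\star)$ to the infinite family $\{u_v\}_{v\in T}$ to obtain $\mu$ with $\mu\cdot u_v\in\op{Path}(E)$ and $|\mu\cdot u_v|>|u_v|$ for all $u_v$ in an infinite subfamily $A$. Continuity of the left shift $l_\mu$ at $0$ gives a neighborhood $V$ of $0$ with $\mu\cdot V\subset U$; since $U\setminus V$ is finite, some $u_vv^{-1}$ with $u_v\in A$ lies in $V$, and then $\mu\cdot u_vv^{-1}=(\mu u_v)v^{-1}$ is a non-zero element of $U$ lying in the \emph{same} class $L_v$ (left multiplication fixes the right coordinate $v$) with $|\mu u_v|>|u_v|$, contradicting the maximality of $|u_v|$. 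Your ``translation principle'' is a correct and slightly cleaner packaging of the continuity-plus-Corollary~\ref{corol1} step, but it is applied to the wrong shift: you need $l_\mu$ acting on maximal-length left coordinates, not $r_{\mu^{-1}}$ acting on right coordinates.
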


\begin{proof}
To derive a contradiction, suppose  that for each element $v\in\op{Path}(E)$ there exists an open compact neighborhood $W_v$ of $0$ such that the set $L_v\cap W_v$ is finite.  Fix an arbitrary open compact neighborhood $U$ of $0$. By Corollary~\ref{corol1}, the set $U\setminus W_v$ is finite for each element $v\in\op{Path}(E)$. Hence the set $U\cap L_v$ is finite for each path $v$.  Let $T=\{v\in\op{Path}(E)\mid L_v\cap U\neq\emptyset\}$. Since the set $U$ is infinite we obtain that the set $T$ is infinite as well.
For each $v\in T$ fix an element $u_vv^{-1}\in L_v\cap U$ such that $|u_v|\geq |y|$ for every element $yv^{-1}\in L_v\cap U$.
Since $G(E)$ satisfies the condition $(\star)$, there exists an infinite subset $A\subset \{u_v\}_{v\in T}$ and an element $\mu\in G(E)$ such that $\mu\cdot y\in \op{Path}(E)$ and $|\mu\cdot y|>|y|$ for each element $y\in A$. Since $\mu\cdot 0=0$, the continuity of left shifts in $G(E)$ yields an open neighborhood $V$ of $0$ such that $\mu\cdot V\subset U$. Since the set $U\setminus V$ is finite (see Corollary~\ref{corol1}), we obtain that there exists an element $v\in T$ such that $u_vv^{-1}\in V\cap U$. Observe that $\mu\cdot u_vv^{-1}\neq 0$, because $\mu\cdot u_v\in\op{Path}(E)$ and $r(\mu\cdot u_v)=r(u_v)=r(v)$. Hence $\mu\cdot u_vv^{-1}\in L_v\cap U$ and $|\mu\cdot u_v|>|u_v|$ which contradicts the choice of the element $u_{v}v^{-1}$.
\end{proof}

\begin{lemma}\label{lemma3}
Let $G(E)$ be a locally compact non-discrete semitopological GIS satisfying the condition~$(\star)$.
Then there exists a $\mathcal{D}$-class $D_e$ such that the set $L\cap U$ is infinite for each open neighborhood $U$ of $0$ and $\mathcal{L}$-class $L\subset D_e$.
\end{lemma}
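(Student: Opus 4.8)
The plan is to upgrade the single $\mathcal L$-class produced by Lemma~\ref{lemma2} to an entire $\mathcal D$-class. By Lemma~\ref{lemma2} fix $v\in\op{Path}(E)$ such that $L_v\cap U$ is infinite for every open compact neighbourhood $U$ of $0$; put $e:=r(v)$, so that $L_v\subset D_e$ by the description of the Green relations recalled before Lemma~\ref{lemma2}. I claim $D_e$ is the desired $\mathcal D$-class. Let $L$ be an arbitrary $\mathcal L$-class contained in $D_e$, say $L=L_w$ for some path $w$ with $r(w)=e=r(v)$, and let $U$ be an arbitrary open neighbourhood of $0$; shrinking $U$ using local compactness we may assume $U$ is open compact. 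I must show $L_w\cap U$ is infinite.

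The key step is to move the infinite set $L_v\cap U$ into $L_w$ by a left shift. Since $r(w)=r(v)$, the element $\mu:=wv^{-1}$ lies in $G(E)$, and for any element $u v^{-1}\in L_v$ (so $u\in\op{Path}(E)$, $r(u)=r(v)=e$) we compute, using the multiplication formula with $v=v\cdot r(v)$, that $\mu\cdot(uv^{-1})^{-1}=wv^{-1}\cdot v u^{-1}=w u^{-1}$, which is a non-zero element of $L_u$... more to the point, I should left-multiply elements of $L_v$ by $wv^{-1}$ directly: for $u v^{-1}\in L_v\cap U$, we have $wv^{-1}\cdot u v^{-1}=0$ unless $v$ is a prefix of $u$ or $u$ is a prefix of $v$. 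To avoid this case split, instead I will use the map $x\mapsto w v^{-1}\cdot x\cdot (wv^{-1})^{-1}$? That still is not quite an $\mathcal L$-to-$\mathcal L$ bijection. The clean choice is to send $u v^{-1}\mapsto w v^{-1}\cdot v u^{-1}\cdot$... Let me instead argue as follows: the right shift $\rho$ by $v w^{-1}$ is continuous, and $\rho(0)=0\cdot vw^{-1}=0$, so there is an open neighbourhood $V$ of $0$ with $V\cdot vw^{-1}\subset U'$ for any preassigned open neighbourhood $U'$ of $0$. Taking $U'$ open compact, by Corollary~\ref{corol1} the set $U\setminus V$ is finite, hence $L_v\cap V\cap U$ is infinite. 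Now for $u v^{-1}\in L_v\cap V$ we have $u v^{-1}\cdot v w^{-1}=u w^{-1}$ (since $v$ is trivially a prefix of itself, the first case of the product formula applies with $w$-part empty), which is a non-zero element of $L_w$ lying in $U'$; and $u v^{-1}\mapsto u w^{-1}$ is injective. Therefore $L_w\cap U'$ is infinite for every open compact neighbourhood $U'$ of $0$, and since an arbitrary open neighbourhood of $0$ contains an open compact one, $L_w\cap U$ is infinite for every open neighbourhood $U$ of $0$.

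The main obstacle is bookkeeping with the graph-inverse-semigroup multiplication: one must pick the multiplier ($vw^{-1}$ on the right, rather than $wv^{-1}$ on the left) so that the product with every element of $L_v$ falls into the "prefix" branch of the formula automatically, producing a genuine injection $L_v\to L_w$ with image avoiding $0$, rather than collapsing infinitely many elements to $0$. Once that multiplier is chosen correctly, continuity of the right shift together with Corollary~\ref{corol1} transports "infinite intersection with every open compact neighbourhood of $0$" from $L_v$ to $L_w$, and since $L_w$ ranges over all $\mathcal L$-classes of $D_e$, the lemma follows.
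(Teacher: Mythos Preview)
Your argument is essentially the paper's own: start from the path $v$ supplied by Lemma~\ref{lemma2}, set $e=r(v)$, and for an arbitrary $\mathcal L$-class $L_w\subset D_e$ transport the infinite set $L_v\cap V$ into $L_w\cap U$ via the right shift by $vw^{-1}$, using continuity of that shift at $0$ together with Corollary~\ref{corol1} (and Lemma~\ref{lemmaeq} for injectivity). The exposition would be cleaner if you removed the false starts with left multiplication and the redundant auxiliary neighbourhood $U'$ (simply take $U'=U$ from the outset), but the mathematics is correct and matches the paper's proof.
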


\begin{proof}
By Lemma~\ref{lemma2}, there exists element $v\in\op{Path}(E)$ such that the set $L_v\cap U$ is infinite for each open compact neighborhood $U$ of $0$. Recall that $D_{r(v)}=\{ab^{-1}\mid r(a)=r(b)=r(v)\}$. Fix an arbitrary element $u\in\op{Path}(E)\cap D_{r(v)}$ and an open compact neighborhood $U$ of $0$. Observe that element $vu^{-1}\neq 0$, because $r(u)=r(v)$. Since $0\cdot vu^{-1}=0$ the continuity of right shifts in $G(E)$ yields an open neighborhood $V$ of $0$ such that $V\cdot vu^{-1}\subset U$. Observe that $L_v\cdot vu^{-1}=L_u$. By Corollary~\ref{corol1}, the set $L_v\cap V$ is infinite. By Lemma~\ref{lemmaeq}, $(L_v\cap V)\cdot vu^{-1}$ is an infinite subset of $U\cap L_u$.
\end{proof}

Now our aim is to prove our main result which generalizes Theorem~\ref{Gutik} and Theorem~\ref{Bard}.
\begin{theorem*}\label{main}
Let $E$ be a strongly connected graph which has finitely many vertices.
Then any locally compact shift-continuous topology on GIS $G(E)$ is either compact or discrete.
\end{theorem*}

\section*{Proof of Main Theorem}
The proof of Main Theorem is divided into a series of $5$ lemmas.
In the following lemmas~\ref{lemma4}--\ref{lemma8} we assume that graph $E$ is strongly connected and has finitely many vertices.  As a consequence, the semigroup $G(E)$ satisfies the condition $(\star)$ (see Remark~2 from \cite{Bardyla-2017(1)}).
By Theorem~\ref{Bard}, Main Theorem holds if the graph $E$ contains only one vertex (in this case $G(E)$ is either finite or isomorphic to a $\lambda$-polycyclic monoid). Hence we can assume that the graph $E$ contains at least two vertices.
By $e$ we denote an arbitrary vertex such that the set $L\cap U$ is infinite for any open neighborhood $U$ of $0$ and any $\mathcal{L}$-class $L\subset D_e$ (Lemma~\ref{lemma3} implies that such vertex $e$ exists).
Recall that by $\langle C^e\rangle$ we denote the inverse subsemigroup of $G(E)$ which is generated by the set $C^e=\{u\in \op{Path}(E)\mid d(u)=r(u)=e\}$.

\begin{lemma}\label{lemma4}
Let $G(E)$ be a locally compact non-discrete semitopological GIS.
Then the set $\langle C^e\rangle\setminus U$ is finite for each open neighborhood $U$ of $0$.
\end{lemma}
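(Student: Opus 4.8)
The plan is to realize $\langle C^e\rangle$, together with $0$, as a closed subsemigroup of $G(E)$ isomorphic to a polycyclic monoid, to pin down its subspace topology via Theorem~\ref{Bard}, and then to read off the conclusion. Since $0$ belongs to every neighbourhood of $0$, we have $\langle C^e\rangle\setminus U=(\langle C^e\rangle\cup\{0\})\setminus U$ for each such $U$, so it suffices to work with the subsemigroup $S:=\langle C^e\rangle\cup\{0\}$ of $G(E)$. By Theorem~\ref{lemma1} every point of $G(E)$ different from $0$ is isolated; as $0\in S$, any point outside $S$ has a one-point neighbourhood missing $S$, so $S$ is closed in $G(E)$, hence locally compact in the subspace topology, and it is shift-continuous as a subsemigroup of a semitopological semigroup. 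Because $E$ is strongly connected and has at least two vertices, picking $f\in E^0\setminus\{e\}$ and paths from $e$ to $f$ and back produces a non-trivial cycle at $e$, so $C_1^e\ne\{e\}$; by Lemma~\ref{lemma0} the semigroup $S$ is then isomorphic to a $\lambda$-polycyclic monoid $\mathcal{P}_\lambda$ with $\lambda\ge 1$ (for $\lambda=1$, the bicyclic monoid with an adjoined zero), and in particular $S$ is infinite. By Theorem~\ref{Bard}, the subspace topology on $S$ is either compact or discrete.

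Next I would exclude the discrete case by showing that $0$ is not isolated in $S$. Since the open compact neighbourhoods of $0$ form a base at $0$ — given a compact neighbourhood $K$ of $0$, its boundary $K\setminus\operatorname{int}K$ is a compact set of isolated points of $G(E)$, hence finite, so $\operatorname{int}K$ is open and compact — it is enough to prove that $C^e\cap U$ is infinite for every open compact neighbourhood $U$ of $0$. By the choice of $e$ (Lemma~\ref{lemma3}), the $\mathcal{L}$-class $L_e\subset D_e$, which is the set of all paths ending at $e$, satisfies that $L_e\cap U$ is infinite. Since $E^0$ is finite, there is a vertex $f$ that is the source of an infinite set $T$ of such paths $y\in L_e\cap U$. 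If $f=e$, then $T\subseteq C^e\cap U$ and we are done. If $f\ne e$, fix a path $z$ with $s(z)=e$ and $r(z)=f$; continuity of the left shift $l_z$ at $0$ (where $l_z(0)=0\in U$) gives an open compact neighbourhood $V$ of $0$ with $z\cdot V\subseteq U$, and $U\setminus V$ is finite by Corollary~\ref{corol1}; hence $T\cap V$ is infinite. For each $y\in T\cap V$, since $s(y)=f=r(z)$ the product formula gives $z\cdot y=zy\in C^e$ (a cycle at $e$, as $s(zy)=r(zy)=e$), and $zy\in U$; the map $y\mapsto zy$ being injective, $C^e\cap U$ is infinite. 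Thus $S\cap U$ is infinite and $0$ is not isolated in $S$.

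It follows that the subspace topology on $S$ is compact and non-discrete, so by the uniqueness assertion of Lemma~\ref{lemmacomp} it coincides with the topology on $S$ in which every non-zero element is isolated and the neighbourhoods of $0$ are precisely the cofinite subsets of $S$ containing $0$. Hence, for an arbitrary open neighbourhood $U$ of $0$ in $G(E)$, the set $S\cap U$ is open in $S$ and contains $0$, so it is cofinite in $S$; that is, $\langle C^e\rangle\setminus U=S\setminus U$ is finite, which is the assertion.

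I expect the main obstacle to be the second paragraph — showing that $0$ is not isolated in $\langle C^e\rangle$, i.e.\ upgrading ``every neighbourhood of $0$ meets $L_e$ in an infinite set'' to the corresponding statement for $C^e$. This is exactly where both hypotheses on $E$ enter essentially: strong connectivity is used to close an arbitrary path ending at $e$ into a cycle based at $e$ by left-multiplication with a connecting path, and finiteness of $E^0$ supplies the pigeonhole keeping infinitely many paths with a common source. Everything else is formal, resting on the already-established dichotomy for polycyclic monoids (Theorem~\ref{Bard}) and the uniqueness of the compact topology (Lemma~\ref{lemmacomp}).
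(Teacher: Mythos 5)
Your proposal is correct and follows essentially the same route as the paper: both arguments show $\langle C^e\rangle\cup\{0\}$ is a closed (hence locally compact) subsemigroup isomorphic to a polycyclic monoid, establish that $0$ is a limit point of $C^e$ via the same pigeonhole-on-vertices plus connecting-path-and-shift-continuity argument, and then invoke Theorem~\ref{Bard} to rule out the discrete alternative and conclude compactness, hence cofiniteness of every neighbourhood of $0$. The only cosmetic difference is that you route the final step through the uniqueness statement of Lemma~\ref{lemmacomp}, whereas the paper deduces finiteness of $\langle C^e\rangle\setminus U$ directly from compactness and the isolatedness of non-zero points.
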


\begin{proof}
By the assumption, there exists a vertex $f$ and paths $x,y$ such that $s(x)=r(y)=e$ and $r(x)=s(y)=f$. Since $xy\in C_1^e$, by Lemma~\ref{lemma0}, $\langle C^e\rangle$ is an infinite set.
Fix an arbitrary compact open neighborhood $U$ of $0$. Recall that $L_e\cap U$ is infinite. Since the graph $E$ contains finitely many vertices, there exists a vertex $f$ such that the set $B=\{u\in L_e\cap U\mid s(u)=f\}$ is infinite. We claim that $0$ is a limit point of $\langle C^e\rangle$.
Indeed, if $f=e$ then $B\subset \langle C^e\rangle$ and hence $0$ is a limit point of $\langle C^e\rangle$. Assume that $f\neq e$. Since graph $E$ is strongly connected, there exists a path $v\in\op{Path}(E)$ such that $s(v)=e$ and $r(v)=f$. Since $v\cdot 0=0$, the continuity of right shifts in $G(E)$ yields an open neighborhood $V$ of $0$ such that $v\cdot V\subset U$. By Corollary~\ref{corol1}, the set $U\setminus V$ is finite  which implies that the set $B\cap V$ is infinite. By Lemma~\ref{lemmaeq}, the set $v(V\cap B)$ is an infinite subset of $U$. Observe that for each element $u\in vB$, $s(u)=r(u)=e$. Hence $0$ is a limit point of $\langle C^e\rangle$. Observe that $\langle C^e\rangle\cup\{0\}$ is a closed and hence locally compact subsemigroup of $G(E)$ which is isomorphic to the polycyclic monoid $\mathcal{P}_{\lambda}$ where $\lambda=|C_1^e\setminus\{e\}|$ (see Lemma~\ref{lemma0}). By Theorem~\ref{Bard}, semigroup $\langle C^e\rangle$ is compact which implies that $\langle C^e\rangle\setminus U$ is finite for each open neighborhood $U$ of $0$.
\end{proof}

\begin{lemma}\label{lemma5}
Let $G(E)$ be a locally compact non-discrete semitopological GIS.
Then the set $L_e\setminus U$ is finite for each open neighborhood $U$ of $0$.
\end{lemma}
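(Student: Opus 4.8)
The plan is to reduce the statement about the whole $\mathcal{L}$-class $L_e$ to the already-established statement about $\langle C^e\rangle$ in Lemma~\ref{lemma4}, by slicing $L_e$ according to the source vertex of its elements. First observe that, since $e$ is a vertex and hence $ae^{-1}=a$ for every path $a$ with $r(a)=e$, the class $L_e$ is precisely the set of paths ending at $e$; as $E^0$ is finite, $L_e=\bigcup_{f\in E^0}L_e^f$, where $L_e^f:=\{a\in\op{Path}(E)\mid s(a)=f,\ r(a)=e\}$. Thus it suffices to prove that $L_e^f\setminus U$ is finite for each $f\in E^0$ and each open neighborhood $U$ of $0$.

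For $f=e$ this is immediate, since $L_e^e=C^e\subseteq\langle C^e\rangle$ and $\langle C^e\rangle\setminus U$ is finite by Lemma~\ref{lemma4}. So fix $f\neq e$. Using that $E$ is strongly connected, choose a path $w$ with $s(w)=e$ and $r(w)=f$. Then for every $a\in L_e^f$ the product $wa$ is again a path, with $s(wa)=r(wa)=e$, so $wa\in C^e$; and a short computation with the GIS multiplication rule shows $w^{-1}\cdot(wa)=a$. In particular $a\mapsto wa$ is an injection of $L_e^f$ into $C^e$.

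The remaining step uses separate continuity. Since $w^{-1}\cdot 0=0$, continuity of the left shift $l_{w^{-1}}$ at $0$ provides an open neighborhood $V$ of $0$ with $w^{-1}\cdot V\subseteq U$. If $a\in L_e^f\setminus U$, then $wa\notin V$ (otherwise $a=w^{-1}\cdot(wa)\in w^{-1}\cdot V\subseteq U$), so $wa$ lies in $C^e\setminus V$; by Lemma~\ref{lemma4} the set $\langle C^e\rangle\setminus V$ is finite, hence so is its subset $C^e\setminus V$. Since $a\mapsto wa$ is injective, $L_e^f\setminus U$ is finite, and taking the union over the finitely many $f\in E^0$ finishes the proof.

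The only genuinely new ingredient is the middle step: for a vertex $f\neq e$ one must build a bridge from $L_e^f$ to $C^e$ that is compatible with a shift, and I expect the small bookkeeping with the GIS multiplication (checking $wa\in C^e$ and $w^{-1}\cdot(wa)=a$) to be the only delicate point. Everything else is a direct appeal to Lemma~\ref{lemma4}, separate continuity, and finiteness of $E^0$; note in particular that no compactness argument or use of Corollary~\ref{corol1} is needed here, because Lemma~\ref{lemma4} already holds for every open neighborhood of $0$.
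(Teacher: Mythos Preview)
Your argument is correct and rests on the same three ingredients as the paper's proof: the reduction to Lemma~\ref{lemma4}, a path $w$ from $e$ to the source vertex $f$ supplied by strong connectedness, and the continuity of the left shift by $w^{-1}$. The paper, however, packages these differently: it argues by contradiction, assuming a single compact open $U$ with $L_e\setminus U$ infinite, extracts one vertex $f$ with infinitely many witnesses, pushes these into $\langle C^e\rangle$ via $w$, and then invokes Corollary~\ref{corol1} (the fact that $U\setminus V$ is finite) to locate an element $wb\in V$ whose preimage $b$ lands back in $U$.

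Your direct slicing $L_e=\bigcup_{f\in E^0}L_e^f$ and the observation that $a\mapsto wa$ injects $L_e^f\setminus U$ into $C^e\setminus V$ make the argument shorter and, as you note, eliminate both the contradiction and the appeal to Corollary~\ref{corol1}; you simply apply Lemma~\ref{lemma4} to the neighborhood $V$ rather than to $U$. This also removes the need to first pass to a compact $U$. The trade-off is minimal: the paper's version emphasizes the role of local compactness uniformly across Lemmas~\ref{lemma4}--\ref{lemma8}, while yours isolates exactly what is needed at this step.
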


\begin{proof}
Suppose that there exists an open compact neighborhood $U$ of $0$ such that the set $A=L_e\setminus U$ is infinite.  Since the graph $E$ has finitely many vertices, we can find a vertex $f$ and an infinite subset $B\subset A$ such that $s(u)=f$ for each element $u\in B$. The strong connectedness of the graph $E$ yields a path $v$ such that $s(v)=e$ and $r(v)=f$. Observe that Lemma~\ref{lemma4} implies that the set $vB\cap U$ is infinite, because $vB$ is an infinite subset of $\langle C^e\rangle$.
Since $v^{-1}\cdot 0=0$, the continuity of left shifts in $G(E)$ yields an open neighborhood $V$ of $0$ such that $v^{-1}\cdot V\subset U$. By Corollary~\ref{corol1}, the set $U\setminus V$ is finite. Then there exists an element $b\in B$ such that $vb\in V$. Hence $v^{-1}\cdot vb=b\in U$ which contradicts the choice of $U$.
\end{proof}

\begin{lemma}\label{lemma6}
Let $G(E)$ be a locally compact non-discrete semitopological GIS.
Then the set $L\setminus U$ is finite for any open neighborhood $U$ of $0$ and any $\mathcal{L}$-class $L\subset D_e$.
\end{lemma}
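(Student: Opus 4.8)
The plan is to reduce the statement for an arbitrary $\mathcal{L}$-class $L\subset D_e$ to the special case $L=L_e$ already established in Lemma~\ref{lemma5}, transporting the conclusion along a right shift. First I would record the shape of the $\mathcal{L}$-classes inside $D_e$: by the description of the Green relations, every such class has the form $L_v=\{av^{-1}\mid a\in\op{Path}(E),\ r(a)=e\}$ for some path $v$ with $r(v)=e$, while $L_e=\{a\in\op{Path}(E)\mid r(a)=e\}$.

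The key algebraic step is to check, directly from the multiplication rule in $G(E)$ (writing $v$ and $v^{-1}$ in the normal forms $ve^{-1}$ and $ev^{-1}$), that $a\cdot v^{-1}=av^{-1}$ for every path $a$ with $r(a)=e$; hence the right shift $r_{v^{-1}}\colon x\mapsto x\cdot v^{-1}$ maps $L_e$ bijectively onto $L_v$. Then, given an open neighborhood $U$ of $0$, since $0\cdot v^{-1}=0$ the continuity of $r_{v^{-1}}$ yields an open neighborhood $V$ of $0$ with $V\cdot v^{-1}\subseteq U$. By Lemma~\ref{lemma5} applied to $V$, the set $L_e\setminus V$ is finite, so $L_e\cap V$ is cofinite in $L_e$; pushing it through the injective map $r_{v^{-1}}$ gives a cofinite subset $(L_e\cap V)\cdot v^{-1}$ of $L_e\cdot v^{-1}=L_v$ that is contained in $U$. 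Thus $L\setminus U=L_v\setminus U$ is finite.

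I do not expect a real obstacle here: the only points needing a little care are the bookkeeping with the $uv^{-1}$-normal form (to verify that $r_{v^{-1}}$ is well defined, single-valued and injective on $L_e$, with image exactly $L_v$) and the observation that Lemma~\ref{lemma5} is invoked for the auxiliary neighborhood $V$, not for $U$ itself. Note that strong connectedness and finiteness of $E^0$ do not enter this argument directly; they are used only through Lemma~\ref{lemma5}.
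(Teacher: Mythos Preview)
Your proposal is correct and follows essentially the same route as the paper: identify $L=L_v$ for some path $v$ with $r(v)=e$, use continuity of the right shift by $v^{-1}$ to find $V$ with $V\cdot v^{-1}\subseteq U$, invoke Lemma~\ref{lemma5} for $V$, and transport the cofiniteness along the bijection $L_e\to L_v$, $a\mapsto av^{-1}$. The paper states this more tersely (it simply asserts $L_e\cdot v^{-1}=L_v$ without spelling out the injectivity), but the argument is the same.
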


\begin{proof}
Fix an arbitrary $\mathcal{L}$-class $L\subset D_e$ and an open compact neighborhood $U$ of $0$. Clearly, $L=L_v$ for some path $v$ such that $r(v)=e$.
Since $0\cdot v^{-1}=0$, the continuity of right shifts in $G(E)$ yields an open neighborhood $V$ of $0$ such that $V\cdot v^{-1}\subset U$.
Observe that $L_e\cdot v^{-1}=L_v$. By Lemma~\ref{lemma5}, the set $L_e\setminus V$ is finite. Hence the set $L_v\setminus U$ is finite as well.
\end{proof}

\begin{lemma}\label{lemma7}
Let $G(E)$ be a locally compact non-discrete semitopological GIS.
Then the set $D_e\setminus U$ is finite for each open neighborhood $U$ of $0$.
\end{lemma}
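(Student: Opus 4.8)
The plan is to run the contradiction argument of Lemma~\ref{lemma5}, but to move an arbitrary chunk of $D_e$ into the subsemigroup $\langle C^e\rangle$ — which by Lemma~\ref{lemma4} is almost contained in every neighbourhood of $0$ — and this time the move requires multiplying \emph{both} on the left and on the right by suitable paths. Assume, to the contrary, that there is an open neighbourhood $U$ of $0$ with $D_e\setminus U$ infinite; shrinking $U$ we may assume it is a compact open neighbourhood of $0$. Each element of $D_e$ has the form $uv^{-1}$ with $r(u)=r(v)=e$, and $s(u),s(v)\in E^0$; since $E^0$ is finite, the pigeonhole principle yields vertices $f,g\in E^0$ and an infinite set $B\subseteq D_e\setminus U$ such that $s(u)=f$ and $s(v)=g$ for every $uv^{-1}\in B$. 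By strong connectedness fix paths $x,y$ with $s(x)=e$, $r(x)=f$ and $s(y)=e$, $r(y)=g$.

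The core of the proof is the identity $x\cdot uv^{-1}\cdot y^{-1}=(xu)(yv)^{-1}$ for every $uv^{-1}\in B$. Here $xu$ and $yv$ are genuine paths with $s(xu)=r(xu)=s(yv)=r(yv)=e$, so $(xu)(yv)^{-1}$ is a non-zero element of $\langle C^e\rangle$; moreover $uv^{-1}\mapsto(xu)(yv)^{-1}$ is injective on $B$, since $u$ and $v$ are recovered from $xu$ and $yv$ by deleting the prefixes $x$ and $y$ (of fixed length). Hence $xBy^{-1}$ is an infinite subset of $\langle C^e\rangle$, so by Lemma~\ref{lemma4} the set $(xBy^{-1})\setminus U$ is finite and therefore $xBy^{-1}\cap U$ is infinite. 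I will also use the companion identity $x^{-1}\cdot(xu)(yv)^{-1}\cdot y=uv^{-1}$.

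Now I pull back. The map $z\mapsto x^{-1}\cdot z\cdot y$ is the composition of a left shift and a right shift, hence continuous, and it sends $0$ to $0\in U$; so the preimage of $U$ under it is an open neighbourhood of $0$, and by local compactness it contains a compact neighbourhood $W$ of $0$ with $x^{-1}\cdot W\cdot y\subseteq U$. By Corollary~\ref{corol1} the set $U\setminus W$ is finite, so from the infinitude of $xBy^{-1}\cap U$ we obtain an element $uv^{-1}\in B$ with $(xu)(yv)^{-1}\in W$. Then $uv^{-1}=x^{-1}\cdot(xu)(yv)^{-1}\cdot y\in x^{-1}\cdot W\cdot y\subseteq U$, which contradicts $uv^{-1}\in B\subseteq D_e\setminus U$ and completes the proof.

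The only place that needs genuine care is verifying the two multiplication identities for $x\cdot uv^{-1}\cdot y^{-1}$ and $x^{-1}\cdot(xu)(yv)^{-1}\cdot y$; once each path $w$ with $r(w)=e$ is written as $w=w\cdot e^{-1}$ (with $e$ denoting the trivial path at the vertex $e$) these are immediate from the product formula in $G(E)$, and the only thing to watch is the degenerate behaviour of trivial paths when $f=e$ or $g=e$, which does not cause any problem. Everything else is bookkeeping with the pigeonhole principle, Lemma~\ref{lemma4} and Corollary~\ref{corol1}, so I do not expect a real obstacle.
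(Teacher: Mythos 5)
Your proof is correct, but it takes a genuinely different route from the paper's. The paper first establishes Lemma~\ref{lemma6} (each $\mathcal{L}$-class in $D_e$ lies almost entirely in $U$), uses it to select, for each $v$ in an infinite set $T$, a representative $u_vv^{-1}\notin U$ with $|u_v|$ \emph{maximal}, and then multiplies on the left by a single path $a$ of positive length: maximality forces $au_vv^{-1}\in U$, and pulling back by $a^{-1}$ gives the contradiction. You instead bypass Lemmas~\ref{lemma5} and~\ref{lemma6} altogether: after a pigeonhole on the pair of source vertices you translate an infinite piece of $D_e\setminus U$ two-sidedly into $\langle C^e\rangle$ via $uv^{-1}\mapsto x\cdot uv^{-1}\cdot y^{-1}=(xu)(yv)^{-1}$, invoke Lemma~\ref{lemma4} to put almost all of the (injective, hence infinite) image inside $U$, and pull back through the separately continuous map $z\mapsto x^{-1}\cdot z\cdot y$ together with Corollary~\ref{corol1}. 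This is in effect the two-sided generalization of the paper's own proof of Lemma~\ref{lemma5}, and it trades the maximal-length selection trick for a heavier reliance on the compactness of $\langle C^e\rangle\cup\{0\}$. I checked the two multiplication identities, the injectivity of $uv^{-1}\mapsto(xu)(yv)^{-1}$ (the canonical form $uv^{-1}$ is unique, and $u,v$ are recovered by stripping the fixed prefixes $x,y$), and the degenerate cases $f=e$ or $g=e$ where $x$ or $y$ is a trivial path; all are sound, so the argument goes through as written.
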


\begin{proof} To derive a contradiction, suppose that there exists an open neighborhood $U$ of $0$ such that the set $A=D_e\setminus U$ is infinite. Without loss of generality we can assume that $U$ is compact. Put $T=\{v\in \op{Path}(E)\cap D_e\mid L_v\setminus U\neq\emptyset\}$. By Lemma~\ref{lemma6}, the set $L_v\setminus U$ is finite for each path $v\in D_e$. Since the set $U$ is infinite, we obtain that the set $T$ is infinite as well. For each path $v\in T$ by $u_v$ we denote an arbitrary path satisfying the following conditions:
\begin{itemize}
\item[$\bullet$] $u_vv^{-1}\notin U$;
\item[$\bullet$] if $uv^{-1}\notin U$ for some path $u$ then $|u_v|\geq |u|$.
\end{itemize}
Since the set $T$ is infinite, the set $B=\{u_vv^{-1}|\hbox{ }v\in T\}$ is infinite as well.
Since graph $E$ has finitely many vertices, there exists a vertex $f$ and an infinite subset $C\subset T$ such that $s(u_{v})=f$ for each path $v\in C$. Put $D=\{u_vv^{-1}\in B|\hbox{ } v\in C\}$. Fix an arbitrary path $a$ such that $|a|\geq 1$ and $r(a)=f$ (by the strong connectedness of graph $E$ such path $a$ always exists). The choice of elements $u_v$ implies that $aD=\{au_{v}v^{-1}|\hbox{ }v\in C\}$ is an infinite subset of $U$. Since $a^{-1}\cdot 0=0$, the continuity of left shifts in $G(E)$ yields an open neighborhood $V\subset U$ such that $a^{-1}\cdot V\subset U$.
By Corollary~\ref{corol1}, the set $U\setminus V$ is finite which implies that the set $V\cap aD$ is infinite. Fix an arbitrary element $au_vv^{-1}\in aD\cap V$. Then $a^{-1}\cdot au_vv^{-1}=u_vv^{-1}\in U\cap D$ which contradicts the choice of $U$.
\end{proof}

The following lemma completes the proof of Main Theorem.
\begin{lemma}\label{lemma8}
Any non-discrete locally compact shift-continuous topology on GIS $G(E)$ is compact.
\end{lemma}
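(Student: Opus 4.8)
\textbf{Proof proposal for Lemma~\ref{lemma8}.}

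The plan is to show that an arbitrary non-discrete locally compact shift-continuous topology $\tau$ on $G(E)$ must coincide with the compact topology $\tau_0$ described in Lemma~\ref{lemmacomp}; since that topology is unique, it suffices to prove that every open neighborhood $U$ of $0$ in $(G(E),\tau)$ is cofinite. Fix such a $U$; without loss of generality (shrinking $U$) assume $U$ is compact. By Theorem~\ref{lemma1} every non-zero element is isolated, so $G(E)\setminus U$ consists of non-zero isolated points, and it is enough to show $G(E)\setminus U$ is finite. The strategy is to cover all of $G(E)\setminus\{0\}$ by finitely many ``blocks'' each of which meets $G(E)\setminus U$ in a finite set. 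The natural blocks are the non-zero $\mathcal{D}$-classes, of which there are exactly $|E^0|<\infty$ many (each non-zero $\mathcal{D}$-class contains exactly one vertex, as recorded after Lemma~\ref{lemmacomp}). Lemma~\ref{lemma7} already handles the $\mathcal{D}$-class $D_e$. So the whole task reduces to proving the analogue of Lemma~\ref{lemma7} for every other $\mathcal{D}$-class.

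First I would fix an arbitrary vertex $g\in E^0$ with $g\neq e$ and aim to show $D_g\setminus U$ is finite. The key tool is strong connectedness of $E$: choose paths $p,q\in\op{Path}(E)$ with $s(p)=e$, $r(p)=g$, $s(q)=g$, $r(q)=e$; these exist because $E$ is strongly connected. Now relate $D_g$ to $D_e$ via the shifts by $p$ and $q$. Concretely, for any path $v$ with $r(v)=g$ one checks $p\cdot (vv^{-1})\cdot p^{-1}$ lands in $D_e$ (its range is $r(p)=g$? no---careful: $p v v^{-1} p^{-1}$ has the form $(pv)(pv)^{-1}$ with $r(pv)=g$, so this stays in $D_g$, not $D_e$; instead I should use left multiplication by $q$: $q\cdot(vv^{-1})\cdot q^{-1}$ is problematic since $s(q)=g=r(v^{-1})$ only if $r(v)=g$... let me instead use the element $qv^{-1}$). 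The cleaner approach: mimic the proof of Lemma~\ref{lemma7} verbatim but with the roles played by $D_e$ replaced by $D_g$. That proof only used strong connectedness, finiteness of $E^0$, Corollary~\ref{corol1}, Lemma~\ref{lemmaeq}, and---crucially---Lemma~\ref{lemma6}, which asserts $L\setminus U$ is finite for every $\mathcal{L}$-class $L\subset D_e$. So what I really need first is the analogue of Lemma~\ref{lemma6} for $D_g$: that $L\setminus U$ is finite for every $\mathcal{L}$-class $L\subset D_g$.

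To get that, I would transfer Lemma~\ref{lemma5} from $D_e$ to $D_g$. Pick a path $w$ with $s(w)=e$ and $r(w)=g$ (strong connectedness). Then for the $\mathcal{L}$-class $L_w\subset D_g$, note $L_e\cdot w^{-1}=L_w$ and $w\cdot L_w=$ (an infinite chunk of) $L_e$-adjacent data; using continuity of the shift by $w$ at $0$ (since $w\cdot 0=0$) and Corollary~\ref{corol1}, an infinite subset of $L_w\setminus U$ would push forward to an infinite subset of $L_e\setminus U'$ for a suitable compact $U'$, contradicting Lemma~\ref{lemma5}. This gives finiteness of $L_w\setminus U$ for one $\mathcal{L}$-class in $D_g$; then the argument of Lemma~\ref{lemma6} (right-multiplying by $v^{-1}$ to move $L_w$ to an arbitrary $L_v\subset D_g$) upgrades this to all $\mathcal{L}$-classes in $D_g$. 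With the analogue of Lemma~\ref{lemma6} in hand, the argument of Lemma~\ref{lemma7} runs unchanged to yield $D_g\setminus U$ finite.

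Finally, summing over the finitely many vertices $g\in E^0$ and adding the already-established fact that $D_e\setminus U$ is finite (Lemma~\ref{lemma7}), we conclude $G(E)\setminus U=\bigcup_{g\in E^0}(D_g\setminus U)$ is finite, so $U$ is cofinite. Hence every open neighborhood of $0$ is cofinite, so $\tau$ has the open base at $0$ consisting of cofinite sets, i.e.\ $\tau=\tau_0$, which is compact by Lemma~\ref{lemmacomp}. \emph{The main obstacle I expect} is the transfer step in the previous paragraph: making sure the chosen connecting paths $w$, $p$, $q$ interact correctly with the block structure---specifically that left- or right-multiplication by the appropriate path genuinely maps $L_e$ onto an $\mathcal{L}$-class of $D_g$ without collapsing infinitely much to $0$ (this is where $r(w)=g$ and Lemma~\ref{lemmaeq}'s finiteness of preimages are both needed), and checking that only $|E^0|$ many $\mathcal{D}$-classes genuinely exhaust the semigroup so the finite union argument closes.
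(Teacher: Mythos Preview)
Your global architecture is exactly the paper's: reduce to showing $D_g\setminus U$ is finite for every vertex $g$, then use $|E^0|<\infty$ to conclude $G(E)\setminus U=\bigcup_{g\in E^0}(D_g\setminus U)$ is finite. The divergence is in the transfer step, and there your mechanism is genuinely broken.

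The identities you write down for the transfer of Lemma~\ref{lemma5} are false. With $s(w)=e$, $r(w)=g$ and $e\neq g$, one has $L_e\cdot w^{-1}=\{0\}$, not $L_w$: for $b\in L_e$ we have $b=be^{-1}$ and $w^{-1}=gw^{-1}$, and since neither vertex $e,g$ is a prefix of the other the product is $0$. Likewise $w\cdot L_w\subset L_w\cup\{0\}$, not anything in $D_e$: for $aw^{-1}\in L_w$ with $s(a)=g$ one gets $(wa)w^{-1}\in L_w$, and $0$ otherwise. More structurally, no single shift maps $L_e$ \emph{onto} an $\mathcal L$-class of $D_g$ (the image $L_e\cdot u$ for a path $u$ with $s(u)=e$, $r(u)=g$ only hits those $a\in L_g$ having $u$ as a suffix), so a direct ``push forward $L_w\setminus U$ into $L_e\setminus U'$'' argument cannot work as stated.

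The paper avoids this by transferring the \emph{hypothesis} rather than the \emph{conclusion}. It does not try to prove the analogue of Lemma~\ref{lemma5} for $g$ directly. Instead, for an arbitrary compact open $U\ni 0$ it picks a path $u$ with $s(u)=e$, $r(u)=g$, uses continuity of the right shift by $u$ to get $V\ni 0$ with $V\cdot u\subset U$, and observes that $(V\cap L_e)\cdot u$ is an infinite subset of $L_g\cap U$ (here $b\mapsto bu$ is injective, $r(bu)=g$, and $V\cap L_e$ is infinite by Lemma~\ref{lemma5}). Thus $L_g\cap U$ is infinite for every compact open $U$, which is precisely the property singled out for $e$ before Lemma~\ref{lemma4}. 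Now Lemmas~\ref{lemma3}--\ref{lemma7} apply verbatim with $g$ in place of $e$, yielding $D_g\setminus U$ finite. So the fix to your outline is a one-line change: instead of attacking the Lemma~\ref{lemma5} analogue for $D_g$, show $g$ satisfies the same starting hypothesis as $e$ and then invoke the already-proved chain.
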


\begin{proof}
By Lemma~\ref{lemma5}, the set $L_e\setminus U$ is finite for each open neighborhood $U$ of $0$.
Fix an arbitrary compact open neighborhood $U$ of $0$ and an arbitrary vertex $f\in E^0\setminus\{e\}$. The strong connectedness of the graph $E$ implies that there exists a path $u$ such that $s(u)=e$ and $r(u)=f$. Since $0\cdot u=0$, the continuity of right shifts in $G(E)$ yields an open neighborhood $V\subset U$ of $0$ such that $V\cdot u\subset U$. Observe that the set $V\cap L_e$ is infinite and, by Lemma~\ref{lemmaeq}, $(V\cap L_e)\cdot u$ is an infinite subset of $L_f\cap U$. Hence we can apply lemmas~\ref{lemma3}--\ref{lemma7} to the vertex $f$ and obtain that the set $D_f\setminus U$ is finite. Since each non-zero $\mathcal{D}$-class contains a unique vertex and the graph $E$ has finitely many vertices, we conclude that $G(E)$ has finitely many $\mathcal{D}$-classes. Hence the set $G(E)\setminus U$ is finite.
\end{proof}

\section*{A generalization of Main Theorem}

Observe that Main Theorem remains true if the graph $E$ is a disjoint union of two graphs $E_1$ and $E_2$ such that the graph $E_1$ satisfies conditions of Main Theorem and the GIS $G(E_2)$ is finite.
 However, Main Theorem can not be generalized over the case when the graph $E$ is a disjoint union of two graphs $E_1$ and $E_2$ such that both semigroups $G(E_1)$ and $G(E_2)$ are infinite.

\begin{proposition}\label{prop1}
Let $E$ be a graph which is a disjoint union of two graphs $E_1$ and $E_2$ such that both semigroups $G(E_1)$ and $G(E_2)$ are infinite. Then there exists a topology $\tau$ on $G(E)$ such that $(G(E),\tau)$ is a locally compact, non-compact, non-discrete quasi-topological semigroup.
\end{proposition}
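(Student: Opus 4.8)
The plan is to equip $G(E)$ with a topology that is compact ``along $G(E_1)$'' and discrete ``along $G(E_2)$'', keeping $0$ as the only non-isolated point. The starting point is the structural observation that, since $E = E_1\sqcup E_2$, no path of $E_1$ is a prefix or a suffix of a path of $E_2$ (distinct components share no vertices and no edges). Hence $G(E) = G(E_1)\cup G(E_2)$ with $G(E_1)\cap G(E_2) = \{0\}$, both $G(E_1)$ and $G(E_2)$ are inverse subsemigroups of $G(E)$, and $ab = 0$ whenever $a\in G(E_1)$ and $b\in G(E_2)$ are not both $0$.

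First I would apply Lemma~\ref{lemmacomp} to the infinite GIS $G(E_1)$ to obtain its unique compact non-discrete shift-continuous topology $\tau_1$ (non-zero points isolated, $0$ having the cofinite subsets containing it as a neighbourhood base, inversion $\tau_1$-continuous), and then define $\tau$ on $G(E)$ by declaring a set $W$ to be $\tau$-open if and only if $W\cap G(E_1)$ is $\tau_1$-open in $G(E_1)$. Since $G(E_1)$ and $G(E)\setminus G(E_1) = G(E_2)\setminus\{0\}$ are disjoint, it is immediate that $\tau$ is a topology, that $G(E_1)$ is $\tau$-open with subspace topology $\tau_1$, and that every point of $G(E_2)\setminus\{0\}$ is $\tau$-isolated while $0$ is the unique non-isolated point. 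I would then dispatch the topological requirements in a line each: $\tau$ is Hausdorff (non-zero points are isolated; $0$ is separated from a point of $G(E_1)$ by Hausdorffness of $\tau_1$, and from a point $y\in G(E_2)\setminus\{0\}$ by the pair $G(E_1)$, $\{y\}$); $\tau$ is locally compact ($\{a\}$ is a compact neighbourhood of an isolated point, and $G(E_1)$ is a compact open neighbourhood of $0$); $\tau$ is non-discrete because every neighbourhood of $0$ contains a cofinite, hence infinite, subset of $G(E_1)$; and $\tau$ is non-compact because $\{G(E_1)\}\cup\{\{y\}\mid y\in G(E_2)\setminus\{0\}\}$ is an open cover with no finite subcover, as $G(E_2)\setminus\{0\}$ is infinite. (The hypothesis that $G(E_1)$ is infinite is used only for non-discreteness, and that $G(E_2)$ is infinite only for non-compactness.)

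The substantive part, and the only place I expect to need care, is checking that $(G(E),\tau)$ is a quasi-topological semigroup, i.e.\ separate continuity of multiplication together with continuity of inversion. Since $0$ is the unique non-isolated point, it suffices to verify continuity of $l_x$, $r_x$ and of inversion \emph{at $0$}. If $x\in G(E_2)$, then $l_x$ and $r_x$ map all of $G(E_1)$ into $\{0\}$, so the neighbourhood $G(E_1)$ of $0$ witnesses continuity. If $x\in G(E_1)$, then $l_x$ and $r_x$ annihilate $G(E_2)\setminus\{0\}$ and act on the $\tau$-open set $G(E_1)$ exactly as the left and right shifts of $(G(E_1),\tau_1)$; these are $\tau_1$-continuous by Lemma~\ref{lemmacomp}, so given a $\tau$-neighbourhood $U$ of $0$ the set $U\cap G(E_1)$ is a $\tau_1$-neighbourhood of $0$ and produces the required $\tau$-neighbourhood inside $G(E_1)$. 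Continuity of inversion is identical in spirit: inversion maps $G(E_1)$ onto $G(E_1)$ and $G(E_2)\setminus\{0\}$ onto itself, and restricts on $G(E_1)$ to the $\tau_1$-continuous inversion. Thus the only genuine bookkeeping is confirming that a $\tau_1$-neighbourhood of $0$ is again a $\tau$-neighbourhood (immediate from the definition of $\tau$) and that the ``cross'' products fall into $\{0\}$ (immediate from the first paragraph); no estimate beyond Lemma~\ref{lemmacomp} is required, and no appeal to strong connectedness is made — consistently, $E_1\sqcup E_2$ is never strongly connected, so Main Theorem is not contradicted.
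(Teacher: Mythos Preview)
Your proof is correct and follows essentially the same approach as the paper: define the topology so that non-zero elements are isolated and a neighbourhood base at $0$ consists of cofinite subsets of $G(E_1)$ containing $0$, then reduce shift-continuity at $0$ to Lemma~\ref{lemmacomp} when $x\in G(E_1)$ and to the vanishing of cross products when $x\in G(E_2)$. Your write-up is in fact more complete than the paper's, which leaves the verifications of Hausdorffness, local compactness, non-compactness and non-discreteness implicit.
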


\begin{proof}
Assume that $E=E_1\sqcup E_2$ and both semigroups $G(E_1)$ and $G(E_2)$ are infinite. We introduce a topology $\tau$ on $G(E)$ in the following way:
each non-zero element $uv^{-1}$ is isolated in $G(E)$. An open neighborhood base of the point $0$ consists of cofinite subsets of $G(E_1)$ which contains point $0$.
Similar arguments as in Lemma~\ref{lemmacomp} imply the continuity of the inversion in $G(E)$.
To prove that $(G(E),\tau)$ is a semitopological semigroup we need to consider the following four cases:
\begin{itemize}
\item[1)] $uv^{-1}\cdot 0=0$, where $uv^{-1}\in G(E_1)$;
\item[2)] $0\cdot uv^{-1}=0$, where $uv^{-1}\in G(E_1)$;
\item[3)] $uv^{-1}\cdot 0=0$, where $uv^{-1}\in G(E_2)$;
\item[4)] $0\cdot uv^{-1}=0$, where $uv^{-1}\in G(E_2)$.
\end{itemize}
The continuity of left (resp. right) shifts in the first (resp. second) case follows from Lemma~\ref{lemmacomp}. The continuity of left and right shifts in cases three and four can be derived from the following equation:
$$uv^{-1}\cdot G(E_1)=G(E_1)\cdot uv^{-1}=0,\hbox{ where } uv^{-1}\in G(E_2).$$
\end{proof}
Theorem~\ref{Bard1} and Proposition~\ref{prop1} imply the following:
\begin{corollary}\label{prop2}
Let $G(E)$ be a GIS which satisfies the dichotomy of the Main Theorem, i.e., a locally compact shift-continuous topology on $G(E)$ is either compact or discrete. Then $G(E)$ satisfies the condition $(\star)$ and the graph $E$ cannot be represented as a union of two graphs $E_1$ and $E_2$ such that semigroups $G(E_1)$ and $G(E_2)$ are infinite.
\end{corollary}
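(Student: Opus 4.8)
The corollary bundles two conclusions, and the plan is to deduce each of them by contraposition --- the decomposition statement from Proposition~\ref{prop1}, the $(\star)$ statement from Theorem~\ref{Bard1}. The decomposition statement is essentially free: if $E$ were a disjoint union $E_1\sqcup E_2$ with $G(E_1)$ and $G(E_2)$ both infinite, then Proposition~\ref{prop1} would put on $G(E)$ a locally compact, non-compact, non-discrete quasi-topological semigroup topology, which in particular is shift-continuous; this violates the dichotomy, so under the hypothesis no such decomposition of $E$ can exist.

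For $(\star)$, assume $G(E)$ satisfies the dichotomy. If $\operatorname{Path}(E)$ is finite then $G(E)$ is finite and $(\star)$ holds vacuously, so I assume $G(E)$ is infinite and, aiming at a contradiction, that $(\star)$ fails; fix an infinite witness $A\subseteq\operatorname{Path}(E)$, i.e.\ a set such that there are no infinite $B\subseteq A$ and $\mu\in G(E)$ with $\mu\cdot x\in\operatorname{Path}(E)$ and $|\mu\cdot x|>|x|$ for all $x\in B$. The key preliminary observation is that this property passes to every infinite subfamily of $A$; splitting $A$ into two infinite parts, I may therefore assume in addition that $A$ is \emph{coinfinite} in $\operatorname{Path}(E)$. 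I would then feed this coinfinite witness into the construction behind Theorem~\ref{Bard1}: it produces a topology $\tau$ on $G(E)$ whose only non-isolated point is $0$, with a neighbourhood base at $0$ consisting of sets cofinite inside a certain set $C$ determined by $A$, all elements off $C\cup\{0\}$ being isolated. Since $A$ is coinfinite, $C\cup\{0\}$ has infinite complement (for instance every path of $\operatorname{Path}(E)\setminus A$, regarded as an element of $G(E)$, lies outside it), so $\tau$ is non-compact. As a semigroup topology $\tau$ is shift-continuous; hence $\tau$ is a non-compact, non-discrete, locally compact, shift-continuous topology on $G(E)$, contradicting the dichotomy. Therefore $(\star)$ holds.

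The delicate point, and what I expect to be the main obstacle, is precisely the claim that $\tau$ can be taken non-compact. Theorem~\ref{Bard1} as stated only guarantees the existence of \emph{some} non-discrete locally compact semigroup topology, so one has to look inside its proof and verify that supplying a coinfinite witness keeps the compact neighbourhood of $0$ with infinite complement; equivalently, one must rule out that $\tau$ coincides with the unique compact shift-continuous topology of Lemma~\ref{lemmacomp}, in which $0$ has a base of cofinite neighbourhoods. This detour is genuinely necessary: one cannot instead argue that the cofinite-at-$0$ topology fails to be a semigroup topology on every infinite $G(E)$, because on the edgeless graph with infinitely many vertices it plainly is one --- so the coinfiniteness of $A$ really is being used.
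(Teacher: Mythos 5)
Your proposal is correct and follows the same two-pronged route the paper intends: Proposition~\ref{prop1} rules out the disjoint-union decomposition, and Theorem~\ref{Bard1} yields condition $(\star)$. The paper gives no written argument beyond citing these two results, and you have put your finger on exactly where that citation is not purely formal: the statement of Theorem~\ref{Bard1} only provides, when $(\star)$ fails, \emph{some} non-discrete locally compact semigroup topology, and this alone is compatible with the dichotomy unless one also shows the topology can be taken non-compact. Your repair is sound. A witness $A$ to the failure of $(\star)$ has the property that for every $\mu$ the set $\{x\in A : \mu\cdot x\in\operatorname{Path}(E),\ |\mu\cdot x|>|x|\}$ is finite, and this property clearly passes to infinite subsets, so one may replace $A$ by an infinite coinfinite witness; the construction in the proof of Theorem~\ref{Bard1} then has a compact neighbourhood of $0$ consisting of elements built from paths in $A$, whose complement contains the infinitely many elements $bb^{-1}$ with $b\in\operatorname{Path}(E)\setminus A$, so the resulting topology is non-compact and contradicts the dichotomy. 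Your side remark is also right that one cannot shortcut this by claiming the compact topology of Lemma~\ref{lemmacomp} is never jointly continuous: for the edgeless graph on infinitely many vertices, $G(E)$ is an infinite semilattice on which the cofinite-at-$0$ topology is a compact topological semigroup, so the coinfiniteness of the witness is genuinely needed. The only caveat is that the non-compactness claim ultimately rests on the internal details of the construction in the cited paper rather than on the statement of Theorem~\ref{Bard1}, but you flag this explicitly and the verification goes through.
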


The above Corollary leads us to the following question:
\begin{question*}
Is it true that a GIS $G(E)$ satisfies the dichotomy of Main Theorem iff $G(E)$ satisfies the condition~$(\star)$ and the graph $E$ cannot be represented as a disjoint union of two graphs $E_1$ and $E_2$ such that semigroups $G(E_1)$ and $G(E_2)$ are infinite?
\end{question*}

\section*{Acknowledgements}

The author acknowledges professor Taras Banakh for his comments and suggestions.

\end{document}